%
%
\documentclass{preprint}

\Title{Analysis and Approximation of\\ Stochastic Nerve Axon Equations}
\ShortTitle{Stochastic Nerve Axon Equations}
\Author{Martin Sauer and Wilhelm Stannat} 
\Address{Institut f\"ur Mathematik, Technische Universit\"at Berlin, Stra\ss e des 17. Juni 136, D-10623 Berlin, Germany and Bernstein Center for Computational Neuroscience, Philippstr. 13, D-10115 Berlin, Germany. E-mail address: \texttt{sauer@math.tu-berlin.de}, \texttt{stannat@math.tu-berlin.de}.}
\Abstract{We consider spatially extended conductance based neuronal models with noise described by a stochastic reaction diffusion equation with additive noise coupled to a control variable with multiplicative noise but no diffusion. We only assume a local Lipschitz condition on the nonlinearities together with a certain physiologically reasonable monotonicity to derive crucial $L^\infty$-bounds for the solution. These play an essential role in both the proof of existence and uniqueness of solutions as well as the error analysis of the finite difference approximation in space. We derive explicit error estimates, in particular a pathwise convergence rate of $\sqrt{\sfrac{1}{n}}-$ and a strong convergence rate of $\sfrac1n$ in special cases. As applications, the Hodgkin-Huxley and FitzHugh-Nagumo systems with noise are considered.}
\Keywords{Stochastic reaction diffusion equations, finite difference approximation, Hodgkin-Huxley equations, FitzHugh-Nagumo equations, conductance based neuronal models}
\AMSsub{Primary 60H15, 60H35, Secondary 35R60, 65C30, 92C20}


\newcommand{\Dom}{\mathcal{O}}

\newcommand{\bfx}{\mathbf{x}}
\newcommand{\bfy}{\mathbf{y}}
\newcommand{\bfX}{\mathbf{X}}
\newcommand{\dwit}{\,\mathrm{d}W_i(t)}

\newcommand{\tu}{\tilde{u}}
\newcommand{\tU}{\tilde{U}}
\newcommand{\tbfX}{\tilde{\bfX}}

\newcommand{\intwithlimits}[1]{\int_{\frac{#1-1}{n}}^{\frac{#1}{n}}}
\newcommand{\sumk}{\sum_{k=1}^n}

\newcommand{\sumkl}{{\kern -0.5em}\sum_{k=1,l=0}^n {\kern -0.5em}}
\newcommand{\vek}[2]{\begin{pmatrix}#1 \\ #2 \end{pmatrix}}

\begin{document}
\section{Introduction}
In the 1950ies, Hodgkin and Huxley \cite{HodgkinHuxley} derived a system of nonlinear equations describing the dynamics of a single neuron in terms of the membrane potential, experimentally verified in the squid's giant axon. In particular, they found a model for the propagation of an action potential along the axon, which is essentially the basis for all subsequent conductance based models for active nerve cells. The system consists of one partial differential equation for the membrane potential $U$
\begin{equation}\label{eq:HHU}
\tau \partial_t U = \lambda^2 \partial_{xx} U - g_{\text{K}} (\bfX) (U - E_{\text{K}}) - g_{\text{Na}} (\bfX) (U - E_{\text{Na}}) - g_{\text{L}} (U - E_{\text{L}})
\end{equation}
and three ordinary differential equations for the gating variables $\bfX = (n, m, h)$ -- describing the probability of certain ion channels being open -- given by
\begin{equation}\label{eq:HHX}
\partial_t \bfX = \mathbf{a} (U) (1 - \bfX)  - \mathbf{b}(U) \bfX.
\end{equation}
For a more detailed description we refer the reader to \cite{Ermentrout} or Section \ref{sec:HH}, where we also introduce all coefficients and constants used above. For well-posedness of these equations we refer to \cite{Mascagni}.

In this article, we study equations of such type under random fluctuations. For the ``The What and Where of Adding Channel Noise to the Hodgkin-Huxley Equations'' we refer to \cite{GoldwynSheaBrown}. At this point, let us safely assume that the noise under consideration is justified. This procedure eventually leads to a stochastic partial differential equation, in particular a stochastic reaction diffusion equation for the variable $U$ coupled to a possibly vector valued auxiliary variable $\bfX$. The main mathematical challenge with such equations is that their coefficients do neither satisfy a global Lipschitz condition nor the standard monotonicity and coercivity conditions. Thus the results of this article are twofold, concerning both well-posedness and numerical approximation.

For the question of existence and uniqueness of solutions the standard methods from \cite{dpzSPDE} or \cite{prevot}, \cite{LiuLocallyMonotone}, \cite{LiuGeneralCoercive} do not apply. However, in the uncoupled system -- with fixed $\bfX$ and $U$ variable, respectively -- monotonicity is restored for each equation individually. This allows us to extend the existing results on variational solutions to cover such stochastic nerve axon equations via a fixed point iteration. Here, the key ingredient is a certain $L^\infty$-bound for the membrane potential $U$.

Concerning the numerical approximation we consider the well-known finite difference method for the spatial variable only. This method has been studied intensively, e.\,g. by Gy\"ongy \cite{Gyongy99}, Shardlow \cite{Shardlow}, Pettersson \& Signahl \cite{Pettersson05} and more recently by the authors \cite{SauerLattice} applied to the simpler FitzHugh-Nagumo equations. Although the method is heavily used in applied sciences, up to the best of our knowledge none of the existing literature covers a convergence result with explicit rates for such non-globally Lipschitz and non-monotone coefficients. For similar results using different approximation schemes we refer among others to \cite{GyongyMillet05}, \cite{GyongyMillet09} for some abstract approximation schemes and strong convergence rates, \cite{Hausenblas}, \cite{LordRougement}, \cite{GinzburgLandau}, \cite{Jentzen} for spectral Galerkin methods also for nonlinearities without a global Lipschitz condition, \cite{BloemkerBurgers} for Galerkin methods with non-diagonal covariance operator, \cite{Kruse} for optimal error estimates of a finite element discretization with Lipschitz continuous nonlinearities, \cite{AnderssonLarsson} for weak convergence of such a finite element method and finally a more or less recent overview concerning the numerical approximation of SPDE in \cite{KloedenJentzen}. Our proofs are based on It\^o's formula for the variational solution, the $L^\infty$-bound for the membrane potential and some uniform improved regularity estimates of the approximated solution. We deduce explicit error estimates, which a priori do not yield a strong convergence rate but only pathwise convergence with smaller rate $\sfrac12-$. In special cases, e.\,g. when the drift satisfies a one-sided Lipschitz condition as in \cite{SauerLattice}, one can improve the result to obtain a strong convergence rate of $\sfrac1n$.

The article is structured as follows. In the next section we describe the precise mathematical setting and all assumption on the coefficients. Section \ref{sec:Wellposed} is then devoted to the existence and uniqueness Theorem \ref{thm:Ex+Unique}, while Section \ref{sec:Approx} introduces the approximation scheme as well as states Theorems \ref{thm:Approx} and \ref{thm:Monotone} on convergence and explicit rates. We finish with two examples, on the one hand the Hodgkin-Huxley system mentioned before and also the FitzHugh-Nagumo equations studied in \cite{SauerLattice}. In particular, we are able to generalize and improve the results obtained there. The appendix contains more or less well-known facts about the stochastic convolution presented here for the reader's convenience.

\section{Mathematical Setting and Assumptions}
Let us first fix some notation. By $c$ and $C$ we denote constants which may change from line to line, the letter $K$ is reserved for a numerical constant that may be explicitly calculated. Let $\Dom = (0,1)$ and define $H \df L^2(\Dom)$ as well as $H_d \df \prod_{i=1}^d H$ and the same notation for other product spaces, too.

In this work we consider the following stochastic reaction-diffusion equation on $H$ coupled nonlinearly to a system of $ d \geq 1$ equations on $H_d$ without diffusion.
\begin{equation}\label{eq:SRDE}
\begin{split}
\mathrm{d} U(t) &= \Big( A U(t) + f\big(U(t), \bfX(t)\big) \Big) \dt + B \dwt\\
\mathrm{d} X_i(t) &= f_i \big(U(t), X_i(t)\big) \dt + B_i \big( U(t), \bfX(t)\big) \dwit, \quad 1 \leq i \leq d\\
\end{split}
\end{equation}
subject to initial conditions $U(0) = u_0$, $\bfX(0) = \bfx_0$ and driven by $d+1$ independent cylindrical Wiener processes $W$, $W_i$ on $H$ with underlying complete, filtered probability space $(\Omega, \algF, \algF_t, \PP)$ and coefficients to be specified below. Note that we use bold symbols for $\bfx \in \R^d$ vector fields with components $x_i$ to discriminate between them and the scalar $U$ variable.

For the linear part of the drift in \eqref{eq:SRDE} we assume $(A u)(x) \df \partial_{xx} u(x)$ equipped with homogeneous Neumann boundary conditions $\partial_x u(0) = 0$ and $\partial_x u(1) = 0$, hence a linear operator $(A, D(A))$ on $H$. It is well known that $A$ is non-negative and self-adjoint with corresponding closed, symmetric form $\mathcal{E}(u,v) = - \int \partial_x u \partial_x v \dx$, $D(\mathcal{E}) = W^{1,2}(\Dom) \fd V$ and thus can be uniquely extended to an operator $A : V \to V^\ast$. Here, $V^\ast$ denotes the dual space of $V$. In order to study \eqref{eq:SRDE} in the framework of variational solutions we introduce the Gelfand triple $V \hookrightarrow H \hookrightarrow V^\ast$ with continuous and dense embeddings. Denote by $\dualp{\cdot}{\cdot}$ the dualization between $V$ and $V^\ast$, then it follows that $\dualp{u}{v} = \scp{u}{v}_H$ for all $u \in H$, $v \in V$.
\begin{rem}
From the modeling perspective the homogeneous Neumann boundary conditions are called \emph{sealed ends}, meaning no currents can pass the boundary. It is reasonable to assume that an input signal is received via an \emph{injected current} at one end (in our case $x=0$) thus we should replace $\partial_x u(0) = I(t)$, where $I \in C_b^\infty([0,T])$ is the input signal. However, it is standard to transform such a problem to a homogeneous boundary with modified right hand side. In particular, the drift $f$ then depends on time $t$ and space variable $x$. Under the assumptions on $I$ above, this does not modify the essential parts of the analysis and we neglect this for the sake of a concise presentation.
\end{rem}

The reaction part of the drift should satisfy the following conditions.
\begin{assum}\label{assum:Drift}
Let $f, f_i \in C^1(\R \times \R^d; \R)$ with
\begin{align*}
\abs{f(u,\bfx)}, \abs{\nabla f(u,\bfx)} &\leq L \big( 1 + \abs{u}^{r - 1} \big) \big( 1 + \rho(\bfx) \big), &\partial_u f(u, \bfx) &\leq L \big( 1 + \rho(\bfx)\big)\\
\abs{f(u, x_i)}, \abs{\nabla f_i(u,x_i)} &\leq L \big( 1 + \rho_i(u) \big) \big( 1 + \abs{x_i}\big), &\partial_{x_i} f_i(u, x_i) &\leq L
\end{align*}
for constants $L> 0$, $2 \leq r \leq 4$, some locally bounded functions $\rho: \R^d \to \R^+$, $\rho_i:\R \to \R^+$ and all $u \in \R, \bfx \in \R^d$. Concerning the growth we only assume that there exists a constant $\alpha > 0$ such that $\rho_i(u) \leq \mathrm{e}^{\alpha \abs{u}}$ for all $u \in \R$.
\end{assum}
Formulated in words, we assume that all functions are locally Lipschitz continuous and do not prescribe any a priori control on the constants. Furthermore, $f$ as a function of $u$ and $f_i$ as a function of $x_i$  with all other variables fixed satisfies a one-sided Lipschitz condition. In order to deal with the growth of the Lipschitz constants in terms $\rho$ and $\rho_i$, our analysis is based on $L^\infty$-estimates for both variables $U$ and $\bfX$ and we therefore have to impose the following additional monotonicity condition.
\begin{assum}\label{assum:Invariance}
There exist $K \geq 0$, $\kappa_K > 0$ such that $\partial_u f(u,x) \leq - \kappa_K$ for all $\abs{u} > K$ and $\bfx \in [0,1]^d$. Furthermore $f_i(u, x_i) \geq 0$ if $x_i \leq 0$ and $f_i(u,x_i) \leq 0$ if $x_i \geq 1$ for all $u \in \R$.
\end{assum}
From a physical point of view, the second assumption corresponds to the invariance of $[0,1]^d$ for $\bfX$, which is natural since it represents some proportion or density. Concerning the noise in equation \eqref{eq:SRDE} we choose additive noise in $U$ and allow for multiplicative noise in $\bfX$ which then has to respect the natural bounds $0$ and $1$, see e.\,g. \cite[Section 2.1]{Faugeras} for a reasonable choice. The precise assumptions are stated below.
\begin{assum}\label{assum:Kernel}
Let $B \in L_2(H,V)$, in particular it admits an integral kernel of the form
\[
(B u)(x) = \int_0^1 b(x,y) u(y) \dy, \quad x \in \Dom, b \in W^{1,2}\big(\Dom^2\big).
\]
Also, let $B_i: H \times H_d \to L_2(H)$ with integral kernels
\[
\big(B_i(u, \bfx)v\big)(x) = \mathbbm{1}_{\{0 \leq \bfx \leq 1\}} \int_0^1 b_i\big(u(x), \bfx(x), x, y) v(y) \dy, \quad x \in \Dom, b_i(u, \bfx) \in L^2\big(\Dom^2\big)
\]
being Lipschitz continuous in the first two variables, i.\,e.
\[
\abs{b_i(u, \bfx, x, y) - b_i(v, \mathbf{y}, x, y)} \leq L \big( \abs{u - v} + \abs{\bfx - \mathbf{y}} \big)
\]
for all $x, y \in \Dom$, $u, v \in \R$ and $\bfx, \mathbf{y} \in \R^d$. Furthermore, assume that $B_i : V \times V_d \to L_2(H,V)$, in particular $b_i(u, \bfx) \in W^{1,2}(\Dom^2)$ for $u \in V, \bfx \in V_d$ with
\[
\norm{B_i(u, \bfx)}_{L_2(H,V)} = \norm{b_i(u, \bfx)}_{W^{1,2}(\Dom^2)} \leq L \big( \norm{u}_V + \norm{\bfx}_{V_d}\big).
\]
\end{assum}
\section{Existence and Uniqueness}\label{sec:Wellposed}
As mentioned before, the existence and uniqueness result is based on $L^\infty$-bounds for both variables, essentially based on the observation that $[0,1]^d$ is forward invariant for the dynamics of $\bfX$, as it is easily seen in e.\,g. the Hodgkin-Huxley equations. For this purpose define the set
\[
\mathcal{X} \df \Big\{ \bfX \in C\big([0,T]; H_d\big) ~\algF_t\text{-adapted}: 0 \leq \bfX(t)\leq 1 ~\PP-\text{a.\,s. for a.\,e. } x \text{ and all } t \in [0,T].\Big\},
\]
i.\,e. what should be the a priori solution set for the auxiliary variables. We will show later on, that given an initial value $\bfx_0$ in between these bounds it indeed holds that $\bfX \in \mathcal{X}$. Moreover, let us introduce the Ornstein-Uhlenbeck process $Y$ as the solution to
\[
\mathrm{d}Y(t) = A Y(t) \dt + B \dwt, \quad Y(0) = 0.
\]
Let $E \df C(\overline{\Dom}; \R)$. The statistics of $Y$ are well known, in particular by Lemma \ref{app:Lem1} it follows that 
\begin{equation}\label{eq:RYt}
R_t^Y \df \sup_{s \in [0,t]} \norm{Y(s}_E < \infty 
\quad \PP\text{-a.\,s.}
\end{equation}
for all $t \in [0,T]$ is a Gaussian random variable. This motivates the following definition as a natural solution set for the $U$ variable.
\[
\mathcal{U} \df \left. \begin{cases} U \in  \algF_t\text{-adapted}: \norm{U(t)}_{L^\infty(\Dom)} \leq R_t ~\PP\text{-a.\,s. for all } t \in [0,T] \text{ for some $\algF_t$-adapted}\\\text{process $R_t$ with Gaussian moments, i.\,e. $\EV{\exp(\frac{\alpha}{2} R_T^2)}< \infty$ for some $\alpha > 0$.}\end{cases} {\kern -1em} \right\}.
\]
Due to the additive noise a process $U$ as a part of a solution to \eqref{eq:SRDE} cannot be uniformly bounded, however such a pathwise estimate is reasonable as we will show below. With this preliminary work we are able to state the following theorem.
\begin{thm}\label{thm:Ex+Unique}
Let $p \geq \max \{2(r-1), 4\}$, $u_0 \in L^p(\Omega, \algF_0, \PP; H)$ be independent of $W$ with Gaussian moments in $E$ and $\bfx_0 \in L^p( \Omega, \algF_0, \PP; H_d)$ with $0 \leq \bfx_0 \leq 1$ $\PP$-a.\,s. for a.\,e. $x$. Then there exists a unique variational solution $(U, \bfX)$ to \eqref{eq:SRDE} with $U \in \mathcal{U}$ and $\bfX \in \mathcal{X}$.
\end{thm}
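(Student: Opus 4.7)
The plan is to exploit that, although the coupled system fails the standard local monotonicity assumption, each of its uncoupled sub-problems does satisfy it once the other variable is frozen inside the a priori sets $\mathcal{X}$, resp.\ $\mathcal{U}$. I would therefore set up a Picard iteration on $\mathcal{U} \times \mathcal{X}$ based on the decomposition $U = Y + V$, where $V \df U - Y$ satisfies the pathwise random PDE
\[
\partial_t V = AV + f(V + Y, \bfX), \qquad V(0) = u_0,
\]
so that the additive noise is absorbed into $Y$ and the analysis of $V$ is deterministic for fixed $\omega$. The two pillars supporting the iteration are (i) a pathwise $L^\infty$-bound on $V$ whenever $\bfX \in [0,1]^d$, which places $U$ into $\mathcal{U}$ with Gaussian moments inherited from $R_T^Y$ and $u_0$, and (ii) forward invariance of $[0,1]^d$ under the $\bfX$-dynamics.

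For (i), I would test the $V$-equation against $p\big((V - K - \norm{Y}_E)_+\big)^{p-1}$ and the analogue for the negative part, drop the $A$-term by non-negativity of $\mathcal{E}$, and use Assumption~\ref{assum:Invariance} to obtain $f(V + Y, \bfX) - f\big(\mathrm{sign}(V)(K + \norm{Y}_E) + Y, \bfX\big) \leq - \kappa_K \big(V - \mathrm{sign}(V)(K + \norm{Y}_E)\big)$ on $\{|V + Y| > K\}$; a Gronwall estimate in $L^p$ followed by $p \to \infty$ then yields the claimed bound. For (ii), It\^o's formula applied to smooth approximations of $(X_i)_-^2$ and $(X_i - 1)_+^2$ produces a drift contribution of the correct sign by Assumption~\ref{assum:Invariance} and a vanishing quadratic-variation term thanks to the cut-off $\mathbbm{1}_{\{0 \leq \bfx \leq 1\}}$ in the definition of $B_i$.

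The iteration assigns to $(\tu, \tbfX) \in \mathcal{U} \times \mathcal{X}$ the pair $(U, \bfX)$, where $U$ solves the SRDE with drift $AU + f(U, \tbfX)$ and each $X_i$ solves $\mathrm{d} X_i = f_i(\tu, X_i)\dt + B_i(\tu, \bfX) \dwit$. With $\tbfX \in [0,1]^d$ fixed, the drift $f(\cdot, \tbfX)$ is locally Lipschitz with polynomial growth of order $r$ and strictly dissipative for $|u| > K$, so it falls into the local monotonicity framework of \cite{LiuLocallyMonotone}; with $\tu \in \mathcal{U}$ fixed, the $\bfX$-equation is a diffusion-free SDE on $H_d$ with Lipschitz noise and one-sided Lipschitz drift, hence uniquely solvable. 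Steps (i)--(ii) show that the image of this map lies again in $\mathcal{U} \times \mathcal{X}$.

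Contraction of the iteration (and $\PP$-a.s.\ uniqueness of the full system) would be obtained by applying It\^o's formula to $\norm{U - U'}_H^2 + \norm{\bfX - \bfX'}_{H_d}^2$ for two iterates or solutions, localizing by stopping times $\tau_M = \inf\{t : R_t \vee R_t' \geq M\}$, and combining the one-sided Lipschitz bounds from Assumption~\ref{assum:Drift} with Lipschitz continuity of $b_i$. I expect the main obstacle to be the exponential growth $\rho_i(u) \leq \mathrm{e}^{\alpha |u|}$: it produces a Gronwall constant of order $\mathrm{e}^{c \alpha \norm{U}_\infty}$, so Gaussian integrability of the pathwise $L^\infty$-bound on $U$ is essential when taking expectations — which is precisely why the hypotheses $p \geq \max\{2(r-1), 4\}$ and Gaussian moments on $u_0$ in $E$ appear in the theorem. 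Sending $M \to \infty$ after the stopped Gronwall estimate yields contraction on a sufficiently small time interval (hence a fixed point, extended to $[0,T]$ by iteration) together with $\PP$-a.s.\ uniqueness.
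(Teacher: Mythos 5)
Your proposal follows essentially the same skeleton as the paper -- subtract the Ornstein--Uhlenbeck process, prove a pathwise $L^\infty$-bound for $U$ from Assumption \ref{assum:Invariance}, prove forward invariance of $[0,1]^d$ for $\bfX$ via It\^o's formula on truncations, solve each frozen sub-problem with \cite{LiuLocallyMonotone}, and close with a Picard iteration -- but the two technical devices you choose differ from the paper's and are worth comparing. For the $L^\infty$-bound the paper does not run an $L^p\to L^\infty$ limit; it tests against smooth normal contractions $\phi_\eps$ approximating $\phi(u)=\min\{0,u+R_t\}$ with the threshold $R_t=\norm{u_0}_E+R+R_t^Y$ and concludes $\norm{\phi(Z(t))}_H\leq\norm{\phi(u_0)}_H=0$ directly. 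If you pursue your version, note two points your sketch glosses over: the threshold must include $\norm{u_0}_E$ (otherwise the Gronwall estimate starts from a nonzero value), and the dissipativity $\partial_u f\leq-\kappa_K$ alone leaves the inhomogeneous term $\scp{f(c+Y,\bfX)}{\psi}_H$ in your differential inequality -- you additionally need $R\geq K$ large enough that $f(\pm R,\bfx)$ has the correct sign for all $\bfx\in[0,1]^d$ (which Assumption \ref{assum:Invariance} guarantees, and which the paper uses explicitly). For the contraction, the paper avoids stopping times altogether: it introduces the adapted weight $\mathrm{e}^{-G_t}$ with $G_t$ built from the random Lipschitz constants $(1+R_t^{r-1})$ and $\rho_i(R_t)$, obtains a pathwise estimate for the $U$-difference and a mean-square (Burkholder--Davis--Gundy) estimate for the $\bfX$-difference -- the multiplicative noise forbids a pathwise bound there -- and iterates the resulting integral inequality to get the factorial decay \eqref{eq:FinalRecursionIneq} and Borel--Cantelli convergence on all of $[0,T]$ at once. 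Your localization by $\tau_M$ can be made to work, but only because the bound $R_t$ from step (i) is the \emph{same} for every iterate (all equations see the same realization of $W$ and the same $u_0$), so that $\tau_M$ is iterate-independent; and the ``sufficiently small time interval'' on which you contract shrinks with $M$, forcing an $M$-dependent restart scheme before letting $M\to\infty$. The exponential weight buys you a cleaner, global-in-time argument and is also exactly the object ($G_t$) that reappears in the error estimate of Theorem \ref{thm:Approx}, which is why the paper sets it up this way.
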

The proof of this theorem is based on a fixed point iteration and solving each equation without coupling. This is carried out in the next three subsections.
\subsection{Solving the Equation for $U$}
Let us fix $\bfX \in \mathcal{X}$, then
\begin{equation}\label{eq:SPDEforU}
\mathrm{d}U(t) = \Big( AU(t) + f\big(U(t), \bfX(t)\big)\Big)\dt + B \dwt, \quad U(0) = u_0.
\end{equation}
\begin{lem}\label{lem:ExforU}
Let $u_0 \in L^p(\Omega, \algF_0, \PP; H)$, $p\geq 2(r-1)$. Given $\bfX \in \mathcal{X}$ there exists a unique variational solution $U$ satisfying
\[
\EV{ \sup_{t \in [0,T]} \norm{U(t)}_H^p + \int_0^T \norm{U(t)}_V^2 \dt} < \infty.
\]
\end{lem}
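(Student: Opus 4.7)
The plan is to use $\bfX \in \mathcal{X}$ as a given parameter and invoke a classical variational existence result for the resulting scalar equation. The decisive observation is that the pointwise invariance $\bfX(t, x) \in [0,1]^d$ bounds the growth factor uniformly, $\rho(\bfX(t, x)) \leq M \df \sup_{\bfx \in [0,1]^d} \rho(\bfx) < \infty$, so that Assumption \ref{assum:Drift} reduces to three quantitative conditions uniform in $\bfX$: polynomial growth $\abs{f(u, \bfX)} \leq C(1+\abs{u}^{r-1})$, the pointwise one-sided Lipschitz bound $\partial_u f(u, \bfX) \leq L(1+M) \df \Lambda$, and $\abs{f(0, \bfX)} \leq C$. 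In particular, the nonlinearity is weakly monotone up to a constant shift.

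First I would verify the standard hemicontinuity, monotonicity, coercivity and growth conditions (H1)--(H4) of \cite{prevot} or \cite{LiuLocallyMonotone} for the drift $\mathcal{F}(t, u) \df Au + f(u, \bfX(t, \cdot))$ seen as a map $V \to V^\ast$. Hemicontinuity comes from the $C^1$-regularity of $f$. Monotonicity follows from $\dualp{Au - Av}{u-v} = -\mathcal{E}(u-v, u-v) \leq 0$ combined with the pointwise telescope $(f(u, \bfX) - f(v, \bfX))(u-v) \leq \Lambda(u-v)^2$, giving $\dualp{\mathcal{F}(u)-\mathcal{F}(v)}{u-v} \leq \Lambda \norm{u-v}_H^2$. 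Coercivity follows from the same decomposition plus $\abs{f(0, \bfX)} \leq C$. For the growth condition, the 1D Sobolev embedding $V \hookrightarrow L^\infty(\Dom)$ together with Gagliardo-Nirenberg interpolation yields
\[
\norm{f(u, \bfX)}_H^2 \leq C\bigl(1 + \norm{u}_{L^{2(r-1)}}^{2(r-1)}\bigr) \leq C\bigl(1 + \norm{u}_H^r \norm{u}_V^{r-2}\bigr),
\]
which, since $r \leq 4$, is controlled by $C(1 + \norm{u}_V^2)(1 + \norm{u}_H^r)$ and fits (H4) with growth exponent $\beta = r$. The existence theorem then provides a unique variational solution with trajectories in $L^2([0,T]; V) \cap C([0,T]; H)$ almost surely.

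To derive the quantitative moment bound, I would apply It\^o's formula in the variational sense to $\norm{U(t)}_H^p$. Using the pointwise telescope $u f(u, \bfX) \leq u f(0, \bfX) + \Lambda u^2$ and integrating, one obtains $\dualp{f(U, \bfX)}{U} \leq C + C\norm{U}_H^2$. With $B \in L_2(H, V) \subset L_2(H)$ inherited from Assumption \ref{assum:Kernel}, the quadratic variation term $\tfrac{p(p-1)}{2}\norm{U}_H^{p-2}\norm{B}_{L_2(H)}^2$ is absorbed into $C(1 + \norm{U}_H^p)$. Keeping the coercive contribution $-p\norm{U}_H^{p-2}\mathcal{E}(U,U) \leq 0$ on the left, Gr\"onwall's lemma applied to $\EV{\norm{U(t)}_H^p}$ yields a finite bound, and the Burkholder-Davis-Gundy inequality (for which $p \geq 2$ suffices) handles the supremum of the stochastic integral. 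The $L^2([0,T]; V)$-bound follows by inspecting the case $p = 2$: integrating over $[0,T]$, taking expectation and using the bound on $\EV{\norm{U(t)}_H^2}$ leaves $\EV{\int_0^T \mathcal{E}(U, U) \dt} < \infty$, which combined with the $H$-bound gives the $V$-bound.

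The principal technical point is the verification of the growth condition (H4): the balance $r + (r-2) = 2(r-1)$ arising from Gagliardo-Nirenberg exactly exploits the hypothesis $r \leq 4$ in dimension $d = 1$, and this is precisely what dictates the minimal moment $p \geq 2(r-1)$ for the initial data to make the It\^o argument close. Uniqueness is then a direct consequence of subtracting two solutions, applying It\^o to the squared $H$-norm of the difference, and using the one-sided Lipschitz bound to conclude by Gr\"onwall.
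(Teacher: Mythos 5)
Your overall strategy coincides with the paper's: freeze $\bfX\in\mathcal{X}$, use the pointwise bound $\rho(\bfX(t,x))\le\rho_0$ to turn Assumption \ref{assum:Drift} into uniform one-sided Lipschitz, growth and coercivity conditions, and invoke the locally monotone variational framework of \cite{LiuLocallyMonotone}; the moment and $V$-estimates you re-derive by It\^o's formula are exactly what that theorem already delivers, so that part is redundant but harmless. The one substantive difference is your verification of the growth condition (H4). The paper estimates $\norm{Au+f(u,\bfX)}_{V^\ast}$ directly, dualizing against $\phi\in V$ and using $\norm{\phi}_{L^\infty}\le C\norm{\phi}_V$, which yields $\norm{Au+f(u,\bfX)}_{V^\ast}\le C(1+\norm{u}_V)(1+\norm{u}_H^{r-2})$, i.e.\ the exponent $\beta=2(r-2)$, and this matches the moment hypothesis $p\ge 2(r-1)=\beta+2$ of \cite[Theorem 1.1]{LiuLocallyMonotone} exactly. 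You instead bound $\norm{f(u,\bfX)}_H^2$ by Gagliardo--Nirenberg, arriving at $\beta=r$; since $r+2>2(r-1)$ whenever $r<4$, your verification would require $u_0\in L^{r+2}$, which is strictly more than the stated hypothesis $p\ge 2(r-1)$ (e.g.\ $r=3$ needs $p\ge5$ rather than $p\ge4$). So your closing claim that the Gagliardo--Nirenberg balance ``exactly dictates'' $p\ge 2(r-1)$ is not accurate; to recover the lemma as stated you should estimate the drift in $V^\ast$ rather than in $H$, exploiting that the test function, not the solution, carries the $L^\infty$-embedding. Everything else (hemicontinuity, monotonicity, coercivity, uniqueness via Gr\"onwall) is correct and identical in substance to the paper.
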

\begin{proof}
This lemma is a more or less immediate application of \cite[Theorem 1.1]{LiuLocallyMonotone}. As $\bfX \in \mathcal{X}$ it follows that $\rho(\bfX(t,x)) \leq \rho_0$ for some uniform (in $t$ and $x$) constant $\rho_0 > 0$. It remains to check the monotonicity and coercivity conditions (H1)--(H4) in \cite{LiuLocallyMonotone}. Of course, for $u, v, w \in V$ the map $s \mapsto \dualp{A(u+sv) + f(\bfX(t), u+sv)}{w}$ is continuous in $\R$. Monotonicity is also quite obvious, since we have a one-sided Lipschitz condition that implies
\[
2\dualp{A(u-v) + f\big((u, \bfX(t)\big) - f\big(v, \bfX(t)\big)}{u-v} \leq - 2\norm{u-v}_V^2 + 2L(1 + \rho_0) \norm{u-v}_H^2.
\]
This directly yields coercivity with the choice $v = 0$ and $\abs{f(0,\bfX(t))} \leq L(1 + \rho_0)$,
\[
2\dualp{Au + f\big( u, \bfX(t)\big)}{u} \leq - 2 \norm{u}_V^2 + 3L(1 + \rho_0) \norm{u}_H^2 + L (1 + \rho_0).
\]
The growth condition is based on the polynomial growth of $f$ of order $r-1 \leq 3$ and the Sobolev embedding $V \hookrightarrow L^\infty(\Dom)$ in dimension one, in detail
\begin{align*}
\norm{Au + f\big(u, \bfX(t)\big)}_{V^\ast} &\leq \norm{u}_V + \sup_{\norm{\phi}_V=1} \int_0^1 \abs{f \big(u, \bfX(t)\big)} \abs{\phi} \dx\\
&\leq \norm{u}_V + \sup_{\norm{\phi}_V=1} L ( 1 + \rho_0) \norm{\phi}_{L^\infty(\Dom)} \int_0^1 \big(1 + \abs{u}^{r-1}\big) \dx\\
&\leq C \big(1 + \norm{u}_V\big) \big(1 + \norm{u}_H^{r-2}\big).\qedhere
\end{align*}
\end{proof}
\begin{rem}
When there is no auxiliary variable $\bfX$ we can obtain strong solutions to \eqref{eq:SRDE} by \cite{GessGradient} without the upper bound $r \leq 4$. This approach makes use of the fact that the drift can be written as the gradient of a quasi-convex potential and it also derives more regularity, in particular $U \in L^2(\PP; L^2([0,T]; W^{2,2}(\Dom)))$. We would rather use such result instead, however in the present case the drift is time-dependent and it is unclear how and if the results of \cite{GessGradient} generalize. Instead of using such an improved regularity for the solution itself, the proof of the approximations results, e.\,g. the convergence rate obtained in Theorem \ref{thm:Approx} is based on more than the canonical regularity for the approximate solution, see Lemma \ref{lem:ImprovedAPriori}.
\end{rem}
Instead of proving the existence of more regular solutions and using Sobolev embedding to deduce $L^\infty$-estimates, we follow a different strategy that yields a pathwise bound and moreover is dimension independent.
\begin{lem}\label{lem:PathwiseBoundforU}
Let $U$ be the solution to \eqref{eq:SPDEforU} from Lemma \ref{lem:ExforU} and assume $u_0$ independent of $W$ having Gaussian moments in $E$. Then there exists an $\algF_t$-adapted stochastic process $R_t$ such that
\[
R_t \df \norm{u_0}_E + R + 2 R_t^Y \quad \PP\text{-a.\,s.}
\]
for some constant $R > 0$ and the process $R_t^Y$ specified in \eqref{eq:RYt} as the supremum of the corresponding Ornstein-Uhlenbeck process and the solution $U$ remains bounded by
\[
\norm{U(t)}_{L^\infty(\Dom)} \leq R_t, \quad \PP\text{-a.\,s.}
\]
for all $t \in [0,T]$. Moreover, $R_t$  has Gaussian moments, thus in particular $U \in \mathcal{U}$.
\end{lem}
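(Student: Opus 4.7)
The plan is to remove the stochastic forcing by subtracting the Ornstein--Uhlenbeck process and then argue pathwise. Set $\tU(t) \df U(t) - Y(t)$; subtracting the two equations cancels the stochastic integral and produces the random PDE
\[
\partial_t \tU(s) = A \tU(s) + f\bigl(\tU(s) + Y(s), \bfX(s)\bigr), \quad \tU(0) = u_0,
\]
which holds pathwise for $\PP$-a.\,e.\ $\omega$. Fix $t \in [0,T]$ and regard this equation on $[0,t]$. By continuity of $f$ on the compact set $[-K,K] \times [0,1]^d$, the constant $C_K \df \sup_{\bfx \in [0,1]^d} \max\{\abs{f(K,\bfx)}, \abs{f(-K,\bfx)}\}$ is finite; choose $R \df K + C_K/\kappa_K$ and introduce the $\algF_t$-measurable, $\PP$-a.\,s.\ finite quantity
\[
M_t \df \norm{u_0}_E + R + R_t^Y.
\]

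Apply the (pathwise) variational chain rule to the convex functional $u \mapsto \norm{(u-M_t)^+}_H^2$, with $M_t$ frozen as a constant on $[0,t]$, to obtain for every $s \leq t$
\[
\norm{(\tU(s) - M_t)^+}_H^2 = 2 \int_0^s \dualp{A\tU(r) + f\bigl(\tU(r) + Y(r), \bfX(r)\bigr)}{(\tU(r) - M_t)^+} \,\mathrm{d}r,
\]
the initial term vanishing because $u_0(x) \leq \norm{u_0}_E \leq M_t$ pointwise. Integration by parts with the Neumann boundary condition yields the nonpositive contribution $\dualp{A\tU}{(\tU - M_t)^+} = -\norm{\partial_x (\tU - M_t)^+}_H^2$. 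On the set $\{\tU(r) > M_t\}$ we have $\tU(r) + Y(r) \geq M_t - R_t^Y = \norm{u_0}_E + R \geq K$, so Assumption \ref{assum:Invariance}, integrated from $K$ up to $\tU + Y$, gives
\[
f\bigl(\tU + Y, \bfX\bigr) \leq C_K - \kappa_K\bigl(\tU + Y - K\bigr) \leq -\kappa_K(\tU - M_t) + C_K - \kappa_K R \leq -\kappa_K(\tU - M_t)
\]
by the choice of $R$. Testing against $(\tU - M_t)^+ \geq 0$ and combining yields
\[
0 \leq \norm{(\tU(s) - M_t)^+}_H^2 \leq -2\kappa_K \int_0^s \norm{(\tU(r) - M_t)^+}_H^2 \,\mathrm{d}r \leq 0,
\]
forcing $\tU(s) \leq M_t$ almost everywhere on $\Dom$ for every $s \leq t$.

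A symmetric argument applied to $-\tU$, based on the lower bound $f(u,\bfx) \geq -C_K - \kappa_K(u+K)$ valid for $u < -K$ and $\bfx \in [0,1]^d$, yields $\tU(s) \geq -M_t$ and hence $\norm{\tU(t)}_{L^\infty(\Dom)} \leq M_t$. Adding $\norm{Y(t)}_E \leq R_t^Y$ then gives
\[
\norm{U(t)}_{L^\infty(\Dom)} \leq M_t + R_t^Y = \norm{u_0}_E + R + 2 R_t^Y \fd R_t,
\]
which is the asserted bound with $R = K + C_K/\kappa_K$. Gaussian moments of $R_t$ follow from those of $u_0$ (by hypothesis, and independent of $W$) and of $R_t^Y$ (Lemma \ref{app:Lem1}) via $(a+b+c)^2 \leq 3(a^2+b^2+c^2)$ together with Cauchy--Schwarz; this may reduce the admissible constant $\alpha$ but preserves the integrability condition defining $\mathcal{U}$. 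The main technical obstacle is justifying the variational chain rule for the non-smooth map $u \mapsto \norm{(u - M_t)^+}_H^2$, because $Y$ is only continuous into $E$ and not necessarily into $V$, so $\tU$ may fail to lie in the usual variational class. This can be handled by approximating $Y$ through its spectral truncations $Y^n \in V$, noting that the estimates above are uniform in $n$, and passing to the limit; alternatively one may interpret $\tU$ as a mild solution and exploit the $L^\infty$-contractivity of the Neumann heat semigroup via a super/subsolution comparison.
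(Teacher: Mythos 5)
Your proposal is correct and follows essentially the same route as the paper: subtract the Ornstein--Uhlenbeck process, run an $L^2$-energy estimate on the positive part $(\tilde U - M_t)^+$ (resp.\ the negative part) of the shifted variable, and use Assumption \ref{assum:Invariance} to make the nonlinear term nonpositive on the exceedance set, with your $R = K + C_K/\kappa_K$ simply making explicit the paper's ``possibly large constant $R$ depending on the shape of $f$''. The technical obstacle you flag at the end --- justifying the chain rule for the non-smooth functional --- is resolved in the paper by replacing $u\mapsto \min\{0,u+R_t\}$ with smooth normal contractions $\phi_\eps$ and passing to the limit $\eps\to 0$ via dominated convergence, which is a cleaner fix than truncating $Y$ (note $Y$ does take values in $V$ since $B\in L_2(H,V)$, so $Z=U-Y$ stays in the variational class).
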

\begin{proof}
At first, define $Z \df U - Y$, hence this difference satisfies a deterministic evolution equation with a random parameter
\begin{equation}
\ddt Z(t) = A Z(t) + f\big(Z(t) + Y(t), \bfX(t)\big), \quad Z(0) = u_0.
\end{equation}
Let $R > 0$ be some (possibly) large constant, essentially dependent on the shape of $f$. Then, define for fixed $t \in [0,T]$ with abuse of notation another process $R_t \df \norm{u_0}_{L^\infty} + R + R_t^Y$, of course with implicit $\omega$ dependence. We show that $U(s)$ does not leave the desired interval up to time $s \leq t$ using a cutoff via a normal contraction. To this end introduce for all $\eps > 0$  a normal contraction $\phi_\eps$ with $\phi_\eps \in C^\infty(\R)$, $\phi_\eps \leq \eps$, $\phi_\eps(u) = u + R_t$ for $u \leq - R_t$, $\abs{\phi_\eps'} \leq 1$ and $\abs{\phi_\eps''}\leq 2/\eps$. As $\eps \to 0$, $\phi_\eps$ approximates $\phi(u) \df \min \{0, u + R_t\}$. Obviously, $\phi_\eps(u) \in H$ (or $V$) if $u \in H$ (or $V$) by the contraction property. Thus we can calculate for $s \in [0,t]$
\begin{align*}
\frac{\mathrm{d}}{\mathrm{d}s} \norm{\phi_\eps\big(Z(s)\big)}_H^2 &= 2 \dualp{A Z(s) + f\big(Z(s) + Y(s), \bfX(s)\big)}{\phi_\eps'\big(Z(s)\big) \phi_\eps\big(Z(s)\big)}\\
&\leq - 2\int_0^1 \abs{\partial_x Z(s)}^2 \Big(\abs{\phi_\eps'\big(Z(s)\big)}^2 + \phi_\eps''\big(Z(s)\big) \phi_\eps\big(Z(s)\big) \Big) \dx\\
&\quad + 2 \scp{f\big(Z(s) + Y(s), \bfX(s)\big)}{\phi_\eps'\big(Z(s)\big) \phi_\eps\big(Z(s)\big)}_H.
\end{align*}
Concerning the first summand we know that $\abs{\phi_\eps'} \leq 1$, hence this term is finite and negative. Also, $\phi_\eps'' \phi_\eps \to 0$ point-wise as $\eps \to 0$ and $\abs{\phi_\eps'' \phi} \leq 2$. For the nonlinear part it holds that $\phi_\eps' \phi_\eps \to \phi$ point-wise as $\eps \to 0$ and $\abs{(\phi_\eps' \phi_\eps)(x)} \leq \abs{\phi_\eps(x)} \leq \abs{x} + R_t(\omega)$. We can integrate the inequality from $0$ up to $t$ and by Lebesgue's dominated convergence theorem we can interchange all integrals and the limit $\eps \to 0$ to obtain
\begin{align*}
\norm{\phi\big(Z(t)\big)}_H^2 &\leq \norm{\phi(u_0)}_H^2 + \int_0^t \scp{f \big(Z(s) + Y(s), \bfX(s)\big)}{\phi\big(Z(s)\big)}_H\\
&= \norm{\phi(u_0)}_H^2 + \int_0^t \scp{f\big(Y(s) -R_t, \bfX(s)\big)}{\phi \big(Z(s)\big)}_H \ds \\
&\quad + \int_0^t \scp{f\big(Z(s) + Y(s), \bfX(s)\big) - f\big( Y(s) - R_t, \bfX(s)\big)}{\phi \big(Z(s)\big)}_H \ds
\end{align*}
Now, the monotonicity in Assumption \ref{assum:Invariance} on $f$ implies that both of the integrals are less or equal to zero. In detail for $R > K > 0$ large enough the function $f(\cdot, \bfX(s))$ is monotone decreasing on $\R \setminus [-R, R]$ and in particular it changes its sign from $+$ to $-$. In both summands the integrand is zero if $Z(s) \geq - R_t$ because $\phi$ vanishes. In the opposite case
\[
Z(s) + Y(s) \leq Y(s) - R_t \leq - R
\]
and the integrand in the second integral is of the form
\[
\mathbbm{1}_{\{Z(s) \leq -R_t\}} \Big(f\big(Z(s) + Y(s), \bfX(s)\big) - f\big( Y(s) - R_t, \bfX(s)\big)\Big) \big( Z(s) + R_t\big) \leq 0.
\]
In the first integral we only need that $f(Y(s) - R_t, \bfX(s)) \geq 0$ since $\phi(Z(s)) \leq 0$. In conclusion, we have shown that
\[
\norm{\phi(Z(t))}_H \leq \norm{\phi(u_0)}_H \quad \Rightarrow \quad \einf_{x \in \Dom} Z(t) \geq -R_t \quad \PP\text{-a.\,s.}
\]
The corresponding upper bound can be obtained in the exact same way with $\tilde{\phi}(u) \df \max \{0, u - R_t\}$. This concludes the proof via the final estimate
\[
\norm{U(t)}_{L^\infty(\Dom)} \leq \norm{Z(t)}_{L^\infty(\Dom)} + \norm{Y(t)}_E \leq R_t + R_t^Y.
\]
Thus $U(t)$ is $\PP$-a.\,s. bounded by $R_t \df \norm{u_0}_E + R + 2 R_t^Y$ and the integrability follows from Lemma \ref{app:Lem1}.
\end{proof}
\subsection{Solving the Equation for $\bfX$}
Let us now fix $U \in \mathcal{U}$, then
\begin{equation}\label{eq:SPDEforX}
\mathrm{d}X_i(t) = f_i\big(U(t), X_i(t)\big) \dt + B_i \big(U(t), \bfX(t)\big) \dwit, \quad 1 \leq i \leq d
\end{equation}
with initial condition $\bfX(0) = \bfx_0$. For a compact notation introduce the vector fields $\mathbf{f}(t,\bfx) \df (f_i(U(t), x_i))_{1\leq i \leq d}$ and $\mathbf{B}(t,\bfx) \df (B_i(U(t), \bfx))_{1\leq i \leq d}$. 
\begin{lem}\label{lem:ExforX}
Let $p \geq 4$ and $\bfx_0 \in L^p(\Omega, \algF_0, \PP; H_d)$. Given $U \in \mathcal{U}$ there exists a unique strong solution $\bfX$ to \eqref{eq:SPDEforX} satisfying
\begin{equation}
\EV{ \sup_{t \in [0,T]} \norm{\bfX(t)}_{H_d}^p} < \infty.
\end{equation}
\end{lem}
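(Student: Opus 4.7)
The plan is to treat \eqref{eq:SPDEforX} as an $H_d$-valued SDE without unbounded operator, and to obtain existence and uniqueness through the variational framework of \cite{LiuLocallyMonotone} after a localization in $U$. The key structural inputs are the one-sided Lipschitz bound $\partial_{x_i} f_i \leq L$ on the drift, the global Lipschitz continuity of $b_i$ in its first two arguments, and the indicator $\mathbbm{1}_{\{0 \leq \bfx \leq 1\}}$ in $B_i$, which switches off the noise outside $[0,1]^d$ and thereby controls its $L_2(H)$-norm purely in terms of $U$.

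To localize, I introduce the stopping time $\tau_N \df \inf\{t \geq 0 : R_t \geq N\}$, where $R_t$ is the adapted bound on $\norm{U(t)}_{L^\infty(\Dom)}$ coming from $U \in \mathcal{U}$; since $R_t$ has Gaussian moments, $\tau_N \nearrow T$ $\PP$-a.\,s. On the stopped interval one has $\rho_i(U(t,x)) \leq \mathrm{e}^{\alpha N}$, so $f_i(U,\cdot)$ is locally Lipschitz in $\bfx$ with deterministic, $N$-dependent constants, and $\norm{B_i(U,\bfx)}_{L_2(H)} \leq C(1+N)$ while remaining globally Lipschitz in $\bfx$. Viewing the system over the degenerate Gelfand triple $V_d = H_d = V_d^\ast$, the hemicontinuity, monotonicity, coercivity and growth conditions (H1)--(H4) of \cite{LiuLocallyMonotone} can be verified essentially as in the proof of Lemma \ref{lem:ExforU}, but even more directly, since $\partial_{x_i} f_i \leq L$ yields the required monotonicity with a deterministic constant. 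This produces a unique strong solution $\bfX^N$ on $[0, \tau_N]$.

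For the moment estimate I apply It\^o's formula to $\norm{X_i^N(t)}_H^2$ componentwise. The drift is bounded via the one-sided Lipschitz property by $L\norm{X_i^N}_H^2 + L\norm{X_i^N}_H(1 + \mathrm{e}^{\alpha R_t})$, and the trace term by $\norm{B_i(U,\bfX^N)}_{L_2(H)}^2 \leq C(1 + \norm{U(t)}_H^2) \leq C(1 + R_t^2)$ thanks to the indicator together with the Lipschitz bound on $b_i$. Burkholder-Davis-Gundy for the martingale and Gr\"onwall's lemma then yield the claimed $p$-th moment estimate uniformly in $N$, with the Gaussian moments of $R_t$ ensuring that all random pre-factors stay in $L^p(\PP)$. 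Uniqueness on $[0, \tau_N]$ is obtained by It\^o applied to $\norm{X_i - X_i'}_H^2$, invoking $\partial_{x_i} f_i \leq L$ and the Lipschitz estimate on $b_i$ and closing with Gr\"onwall; the $\bfX^N$ are consistent, and letting $\tau_N \nearrow T$ delivers the unique global strong solution $\bfX$.

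The main obstacle is that the constants governing the monotonicity, coercivity and growth of the coefficients depend on the random parameter $U$ through $\rho_i(U) \leq \mathrm{e}^{\alpha \abs{U}}$, so they are a priori neither deterministic nor uniformly bounded; this is what would prevent a direct application of \cite{LiuLocallyMonotone}. The localization via $\tau_N$ is precisely designed to handle this: on $[0, \tau_N]$ all the relevant constants become deterministic and $N$-dependent, while the Gaussian moments of $R_t$ keep the resulting pre-factors in $L^p(\PP)$ and hence do not deteriorate when passing $N \to \infty$.
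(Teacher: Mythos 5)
Your proof is correct and rests on the same pillars as the paper's: the degenerate Gelfand triple $H_d = V_d = V_d^\ast$, the variational existence theorem of Liu--R\"ockner, monotonicity from $\partial_{x_i} f_i \leq L$ together with the global Lipschitz bound on $b_i$, and the interplay between the exponential bound $\rho_i(u) \leq \mathrm{e}^{\alpha \abs{u}}$ and the Gaussian moments of $R_t$. The genuine difference is how the random, unbounded coefficients are tamed. You localize with $\tau_N \df \inf\{t : R_t \geq N\}$ so that all constants become deterministic before invoking the existence theorem, and then you must patch the consistent solutions $\bfX^N$ together and pass $N \to \infty$ using a moment bound that is uniform in $N$. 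The paper instead applies \cite[Theorem 1.1]{LiuLocallyMonotone} directly on $[0,T]$: it observes that the monotonicity constant $L + 2L^2$ is already deterministic without any cut-off (only the drift and diffusion evaluated at zero are random), and that the hypotheses of that theorem tolerate adapted inhomogeneities $g_t \df \tfrac14 L^2 \sum_i (1+\rho_i(R_t))^2$ and $\tilde g_t$ in the coercivity and growth conditions, provided they lie in $L^p([0,T]\times\Omega)$ --- which is exactly what $\rho_i(u) \leq \mathrm{e}^{\alpha\abs{u}}$ and $\EV{\exp(\tfrac{\alpha}{2} R_T^2)} < \infty$ deliver. Your route is more self-contained in that it only uses the existence theorem with deterministic constants, at the cost of the extra (routine) consistency and limiting arguments; the paper's route is shorter but leans on the fact that the abstract theorem already admits stochastic coefficient processes. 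Both yield the same conclusion, and your moment estimate via It\^o, Burkholder--Davis--Gundy and Gr\"onwall is sound since all random prefactors enter additively and have moments of every order.
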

\begin{proof}
This lemma is again an application of \cite[Theorem 1.1]{LiuLocallyMonotone}. We need to verify (H1)--(H4), this time in the Gelfand triple $H_d \hookrightarrow H_d \hookrightarrow H_d$ and again the hemicontinuity is straightforward to obtain since everything is a composition of continuous mappings. Also the monotonicity follows from the one-sided Lipschitz condition for each $f_i$ and the global Lipschitz assumption on $b_i$,
\[
\scp{ \mathbf{f}(t,\bfx) - \mathbf{f}(t,\bfy)}{\bfx - \bfy}_{H_d} + \norm{B(t,\bfx) - B(t,\bfy)}_{L_2(H_d)}^2 \leq \big(L + 2L^2\big) \norm{\bfx - \bfy}_{H_d}^2.
\]
Concerning coercivity we see that with the upper bound for $\abs{f_i(u,x_i)}$
\begin{align*}
\scp{\mathbf{f}(t, \bfx)}{\bfx}_{H_d} &= L \norm{\bfx}_{H_d}^2 + \scp{\mathbf{f}(t, 0)}{\bfx}_{H_d} \leq L \norm{\bfx}_{H_d}^2 + L \Big(\sum_{i=1}^d \big( 1 + \rho_i (R_t)\big)^2\Big)^\frac12 \norm{\bfx}_{H_d}\\
&\leq (L + 1) \norm{\bfx}_{H_d}^2 + \tfrac14 L^2 \sum_{i=1}^d \big( 1 + \rho_i (R_t)\big)^2 \fd \big(L + 1) \norm{\bfx}_{H_d}^2 + g_t.
\end{align*}
The stochastic process $g_t$ is $\algF_t$-adapted and in $L^p([0,T] \times \Omega, \dt \otimes \PP)$ for every $1 \leq p < \infty$ because each $\rho_i$ is of at most exponential growth. Also, the linear growth condition on $\mathbf{B}$ is immediate by the assumptions on $b_i$, since
\begin{align*}
\norm{\mathbf{B}(t,\bfx)}_{L_2(H_d)}^2 &\leq 2 \sum_{i=1}^d \Big( \norm{B_i\big( 0, 0 \big)}_{L_2(H)}^2 + \norm{B_i\big(U(t),\bfx\big) - B_i(0,0)}_{L_2(H)}^2\Big)\\
&\leq C + 4 L^2 \big( \norm{U(t)}_H^2 + \norm{\bfx}_{H_d}^2\big) \leq \tilde{g}_t + 4 L^2 \norm{\bfx}_{H_d}^2,
\end{align*}
where the stochastic process $\tilde{g}_t$ is again $\algF_t$-adapted and in every $L^p$. In a similar manner it follows that
\[
\norm{\mathbf{f}(t, \bfx)}_{H_d} = \Big( \sum_{i=1}^d \int_0^1 \abs{f_i \big(U(t), x_i\big)}^2 \dx \Big)^\frac12 \leq K g_t^\frac12 \big( 1 + \norm{\bfx}_{H_d}\big),
\]
hence the growth condition holds with $\beta =2$ and this guarantees the existence of a unique variational solution $\bfX$, which is indeed a strong solution since everything is $H_d$-valued. In particular, $\bfX$ satisfies
\begin{equation}\label{eq:IntegralEquationForX}
\bfX(t) = \bfx_0 + \int_0^t \mathbf{f}\big(s, \bfX(s)\big) \ds + \int_0^t \mathbf{B}\big(s, \bfX(s)\big) \,\mathrm{d} \mathbf{W}(s), \quad t \in [0,T]
\end{equation}
$\PP$-a.\,s. where $\mathbf{W} \df (W_i)_{1 \leq i \leq d}$.
\end{proof}
\begin{lem}\label{lem:InvarianceX}
Let $\bfX$ be the strong solution to \eqref{eq:IntegralEquationForX}. Assume $0 \leq \bfx_0 \leq 1$ $\PP$-a.\,s. for a.\,e. $x$, then $0 \leq \bfX(t) \leq 1$ $\PP$-a.\,s for a.\,e. $x$ and all $t \in [0,T]$.
\end{lem}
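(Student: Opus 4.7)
The strategy mirrors the cutoff argument of Lemma~\ref{lem:PathwiseBoundforU}, but the genuine stochastic term in \eqref{eq:SPDEforX} forces us to replace the deterministic ODE analysis by It\^o's formula applied to a carefully designed convex functional on $H$. I treat each component separately and prove $X_i(t) \geq 0$; the upper bound $X_i(t) \leq 1$ follows by the same reasoning with obvious modifications. Choose a family $\phi_\eps \in C^2(\R)$, $\eps > 0$, of non-negative convex approximations of $x \mapsto (x^-)^2 = \max(-x,0)^2$ satisfying $\phi_\eps(x) = x^2$ for $x \leq -\eps$, $\phi_\eps(x) = 0$ for $x \geq 0$, $\phi_\eps' \leq 0$ and $0 \leq \phi_\eps'' \leq 2$. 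The crucial structural feature is that the supports of $\phi_\eps'$ and $\phi_\eps''$ both lie in $(-\infty, 0)$. Define the functional $\Phi_\eps: H \to \R$, $\Phi_\eps(u) \df \int_0^1 \phi_\eps(u(x)) \dx$; it is twice Fr\'echet differentiable with $\Phi_\eps'(u) = \phi_\eps'(u) \in H$ and $\Phi_\eps''(u)(v, w) = \int_0^1 \phi_\eps''(u(x)) v(x) w(x) \dx$.

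Applying It\^o's formula in $H$ to $\Phi_\eps(X_i(t))$, using \eqref{eq:IntegralEquationForX} componentwise, yields
\[
\Phi_\eps(X_i(t)) = \Phi_\eps(x_{0,i}) + \int_0^t \scp{\phi_\eps'(X_i(s))}{f_i(U(s), X_i(s))}_H \ds + M_\eps(t) + \tfrac12 \int_0^t \operatorname{tr}\big( B_i(s)^\ast \Phi_\eps''(X_i(s)) B_i(s) \big) \ds,
\]
where $M_\eps$ is a local martingale. The initial term vanishes since $x_{0,i} \geq 0$ $\PP$-a.\,s. and therefore $\phi_\eps(x_{0,i}) = 0$. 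The trace term vanishes identically: using the kernel representation of $B_i$ from Assumption~\ref{assum:Kernel} one computes the integrand as $\int_0^1 \phi_\eps''(X_i(s,x)) \mathbbm{1}_{\{0 \leq \bfX(s,x) \leq 1\}} \int_0^1 \abs{b_i(U(s,x), \bfX(s,x), x, y)}^2 \dy \dx$, but on the support of $\phi_\eps''(X_i(s,x))$ we have $X_i(s,x) < 0$, which contradicts $0 \leq \bfX(s,x) \leq 1$, killing the indicator. The drift integral is non-positive: on $\{X_i < 0\}$ Assumption~\ref{assum:Invariance} gives $f_i(U, X_i) \geq 0$ while by construction $\phi_\eps'(X_i) \leq 0$; elsewhere $\phi_\eps'(X_i) \equiv 0$.

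A standard localization along stopping times exhausting the moment bound from Lemma~\ref{lem:ExforX} shows $M_\eps$ is a true martingale, so $\EV{\Phi_\eps(X_i(t))} \leq 0$. Combined with $\phi_\eps \geq 0$ this forces $\Phi_\eps(X_i(t)) = 0$ $\PP$-a.\,s., and dominated convergence as $\eps \to 0$ (using $\phi_\eps(x) \leq x^2$) gives $\int_0^1 (X_i(t,x)^-)^2 \dx = 0$, i.\,e. $X_i(t,x) \geq 0$ for a.\,e. $x$ $\PP$-a.\,s. The upper bound is obtained by repeating the argument with a convex $C^2$ approximation $\psi_\eps$ of $((x-1)^+)^2$ whose derivatives are supported in $(1, \infty)$: at such points $\bfX \notin [0,1]^d$ so the indicator in $B_i$ annihilates the trace term, $f_i \leq 0$ by Assumption~\ref{assum:Invariance} and $\psi_\eps' \geq 0$, producing the required non-positive drift. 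Pathwise continuity of $\bfX \in C([0,T]; H_d)$ finally upgrades the $t$-by-$t$ estimate to a statement holding $\PP$-a.\,s. for all $t \in [0,T]$ simultaneously via a countable dense subset of times. The delicate point is the design of $\phi_\eps$ with derivatives compactly supported \emph{strictly inside} $\{x < 0\}$ rather than merely on $(-\eps, \eps)$: this one-sided smoothing is precisely what makes the pointwise indicator in $B_i$ annihilate the It\^o correction exactly, leaving no $O(1)$ residue in the limit $\eps \to 0$.
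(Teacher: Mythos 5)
Your proof is correct and follows essentially the same route as the paper's: a smoothed one-sided convex penalty applied via It\^o's formula, the indicator in $B_i$ annihilating the It\^o correction because its support is disjoint from that of the penalty's derivatives, and Assumption \ref{assum:Invariance} forcing the drift contribution to be non-positive. The only cosmetic differences are that the paper penalizes with $\norm{\phi_{j,\eps}(\bfX)}_{H_d}^2$ for normal contractions $\phi_{j,\eps}$ and discards the stochastic integral pathwise in the limit $\eps \to 0$, whereas you smooth $(x^-)^2$ directly with derivatives supported strictly in $\{x<0\}$ (so the correction vanishes for every fixed $\eps$) and take expectations before recovering the almost sure statement from non-negativity.
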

\begin{proof}
The proof is similar to the one of Lemma \ref{lem:PathwiseBoundforU} and involves the functions $\phi_0 (x) \df \min\{0, x\}$ and $\phi_1(x) \df \max \{1, x\}$. For $\eps > 0$ denote by $\phi_{j,\eps}$ the smooth normal contractions approximating $\phi_j$, $j=0,1$. Consider It\^o's formula for $\phi_{j, \eps}(\bfX(t))$ applied component-wise.
\begin{align*}
\mathrm{d} \norm{ \phi_{j,\eps}\big(\bfX(t)\big)}_{H_d}^2 &= 2 \scp{ \phi_{j,\eps} \big(\bfX(t)\big) \phi_{j,\eps}' \big( \bfX(t)\big)}{ \mathbf{f}\big(t, \bfX(t)\big)}_{H_d} \dt\\
&\quad + 2\scp{ \phi_{j,\eps} \big( \bfX(t)\big) \phi_{j,\eps}' \big( \bfX(t)\big)}{ \mathbf{B}\big(t,\bfX(t)\big) \,\mathrm{d} \mathbf{W}(t)}_{H_d}\\
&\quad + \norm{\mathbf{B}\big(t,\bfX(t)\big)^\ast \phi_{j, \eps}'\big(\bfX(t)\big)}_{H_d}^2 \dt\\
&\quad + \scp{ \phi_{j,\eps} \big( \bfX(t)\big) \phi_{j,\eps}'' \big(\bfX(t)\big) }{\norm{ \mathbf{B}\big(t,\bfX(t)\big)}_{L_2(H_d)}^2}_{H_d} \dt.
\end{align*}
In the limit $\eps \to 0$ the stochastic integral vanishes $\PP$-a.\,s. as well as the It\^o correction term since the integrands are nonzero on disjoint sets. Then, $\phi_{j,\eps} \phi_{j,\eps}'' \to 0$ as $\eps \to 0$ lets the latter summand disappear. Thus, by Lebesgue's dominated convergence theorem there only remains the drift part that is
\begin{align*}
\mathrm{d} \norm{ \phi_j\big(\bfX(t)\big)}_{H_d}^2 &= 2 \scp{ \phi_j \big(\bfX(t)\big)}{ \mathbf{f}\big(t, \bfX(t)\big)}_{H_d} \dt\\
&= 2 \sum_{i=1}^d \int_0^1 \phi_j\big(X_i(t)\big) f_i \big(U(t), \bfX(t)\big) \dx \dt \leq 0
\end{align*}
by Assumption \ref{assum:Invariance}. If follows $\norm{ \phi_j\big(\bfX(t)\big)}_{H_d}^2 \leq \norm{ \phi_j\big(\bfx_0 \big)}_{H_d}^2$ $\PP$-a.\,s. Obviously, this implies $\phi_j (\bfX(t)) = 0$ $\PP$-a.\,s. for all $t \in [0,T]$ and a.\,e. $x \in \Dom$ and in conclusion $0 \leq \bfX(t) \leq 1$.
\end{proof}

\subsection{Proof of Theorem \ref{thm:Ex+Unique}}
Define the approximating sequence $(U^n, \bfX^n)$ as follows. Let $U^0 \equiv u_0$ and $\bfX^0 \equiv \bfx_0$. For $n \geq 1$ let $U^n$ be the solution to
\begin{equation}\label{eq:Un}
\mathrm{d} U^n(t) = \Big( A U^n(t) + f \big( U^n(t), \bfX^{n-1}(t)\big) \Big) \dt + B \dwt
\end{equation}
with initial condition $U^n(0) = u_0$. Furthermore, let $\bfX^n$ be the solution to
\begin{equation}\label{eq:Xn}
\mathrm{d} X_i^n(t) = f_i \big(U^n(t), \bfX^n(t)\big) \dt + B_i \big( U^n(t), \bfX^n(t)\big) \dwit, \quad 1 \leq i \leq d,
\end{equation}
with $\bfX^n(0)=\bfx_0$. According to Lemmas \ref{lem:ExforU}--\ref{lem:InvarianceX} these processes exist and are unique. In particular, $U^n \in \mathcal{U}$ and $\bfX^n \in \mathcal{X}$ for all $n \geq 0$. Apparently, we can study the differences $U^{n+1} - U^n$ and $\bfX^{n+1} - \bfX^n$ for $n\geq 1$ in $H$ and $H_d$, respectively. More precisely, it holds that
\begin{align}
\begin{split}\label{eq:DifferenceU}
\mathrm{d} \big(U^{n+1}(t) - U^n(t)\big) &= A  \big(U^{n+1}(t) - U^n(t)\big) \dt\\
&\quad + \Big( f\big( U^{n+1}(t), \bfX^n(t)\big) - f\big(U^n(t), \bfX^{n-1}(t)\big)\Big)\dt,
\end{split}\\
\begin{split}\label{eq:DifferenceX}
\mathrm{d} \big(X_i^{n+1}(t) - X_i^n(t)\big) &= \Big( f_i\big( U^{n+1}(t), \bfX^{n+1}(t)\big) - f_i\big(U^n(t), \bfX^n(t)\big)\Big)\dt\\
&\quad + \Big( B_i\big( U^{n+1}(t), \bfX^{n+1}(t)\big) - B_i\big(U^n(t), \bfX^n(t)\big)\Big)\dwit,
\end{split}
\end{align}
for $1 \leq i \leq d$. Let us start with an useful but elementary estimate due to Assumption \ref{assum:Drift}. For all $u,v \in [-R_t, R_t]$, $\bfx, \bfy \in [0,1]^d$ it holds that
\begin{equation}\label{eq:LocalLipschitz}
\begin{split}
\abs{f(u, \bfx) - f(v, \bfy)} &\leq L \big( 1 + R_t^{r-1}\big) ( 1 + \rho_0) \big( \abs{u-v}^2 + \abs{\bfx - \bfy}^2 \big)^\frac12,\\
\abs{f_i(u, \bfx) - f_i(v, \bfy)} &\leq 2 L \big( 1 + \rho_i (R_t)\big) \big( \abs{u-v}^2 + \abs{\bfx - \bfy}^2 \big)^\frac12.
\end{split}
\end{equation}
At this point it might be noteworthy that the pathwise $L^\infty$-estimate from Lemma \ref{lem:PathwiseBoundforU} is uniform in $n$ since each equation is driven by the same realization of the cylindrical Wiener process $W$. Corresponding to these Lipschitz constants define the process $(G_t)_{t \in [0,T]}$ by
\begin{equation}\label{eq:ProcessG}
G_t \df \int_0^t \Big(2L^2 \big( 1 + R_s^{r-1}\big)^2 (1 + \rho_0)^2 + 4L^2 \sum_{i=1}^d \big( 1 + \rho_i(R_s)\big)^2 + K \big(dL^2 + 1\big) \Big)\ds.
\end{equation}
The next step are differential inequalities for the differences in $H$ and $H_d$. \eqref{eq:DifferenceU} and \eqref{eq:LocalLipschitz} for $f$ and Young's inequality imply
\begin{align*}
&\ddt \norm{U^{n+1}(t) - U^n(t)}_H^2 = 2\dualp{A \big( U^{n+1}(t) - U^n(t)\big)}{U^{n+1}(t) - U^n(t)}\\
&\qquad + 2\dualp{f\big( U^{n+1}(t), \bfX^n(t)\big) - f\big(U^n(t), \bfX^{n-1}(t)\big)}{U^{n+1}(t) - U^n(t)}\\
&\quad \leq - 2\norm{U^{n+1}(t) - U^n(t)}_V^2\\
&\qquad + 2 \norm{f\big( U^{n+1}(t), \bfX^n(t)\big) - f\big(U^n(t), \bfX^{n-1}(t)\big)}_H \norm{U^{n+1}(t) - U^n(t)}_H\\
&\quad \leq - 2\norm{U^{n+1}(t) - U^n(t)}_V^2 + \norm{\bfX^n(t) - \bfX^{n-1}(t)}_{H_d}^2\\
&\qquad + \Big(L^2 \big( 1 + R_t^{r-1}\big)^2 ( 1 + \rho_0)^2 + 1\Big) \norm{U^{n+1}(t) - U^n(t)}_H^2.
\end{align*}
As a consequence, we can obtain the following pathwise estimate.
\begin{equation}\label{eq:FinalDiffUn}
\begin{split}
&\sup_{t \in [0,T]} \mathrm{e}^{-G_t} \norm{U^{n+1}(t) - U^n(t)}_H^2 + 2 \int_0^T \mathrm{e}^{-G_t} \norm{U^{n+1}(t) - U^n(t)}_V^2 \dt\\
&\quad \leq \int_0^T \mathrm{e}^{-G_t} \norm{\bfX^n(t) - \bfX^{n-1}(t)}_{H_d}^2 \dt \quad \PP\text{-a.\,s.}
\end{split}
\end{equation}
In contrast to the case above, the multiplicative noise in $\bfX$ only allows for mean square estimates. However, in a similar fashion one can obtain the analogous inequality for the second variable using \eqref{eq:DifferenceX}, the local Lipschitz conditions for each $f_i$ from \eqref{eq:LocalLipschitz} together the global Lipschitz continuity of each $b_i$.
\begin{align*}
&\mathrm{d} \norm{\bfX^{n+1}(t) - \bfX^n(t)}_{H_d}^2 = 2 \scp{\mathbf{f}\big( U^{n+1}(t), \bfX^{n+1}(t)\big) - \mathbf{f} \big(U^n(t), \bfX^n(t)\big)}{\bfX^{n+1}(t) - \bfX^n(t)}_{H_d} \dt \\
&\qquad+ 2 \scp{ \bfX^{n+1}(t) - \bfX^n(t)}{ \Big( \mathbf{B}\big(U^{n+1}(t), \bfX^{n+1}(t)\big) - \mathbf{B} \big( U^n(t), \bfX^n(t)\big)\Big) \,\mathrm{d}\mathbf{W}(t)}_{H_d}\\
&\qquad+ \norm{\mathbf{B}\big(U^{n+1}(t), \bfX^{n+1}(t)\big) - \mathbf{B} \big( U^n(t), \bfX^n(t)\big)}_{L_2(H_d)}^2 \dt\\
&\quad \leq (2dL^2 + 1) \norm{U^{n+1}(t) - U^n(t)}_H^2 \dt\\
&\qquad + \Big(4 L^2 \sum_{i=1}^d \big(1 + \rho_i(R_t)\big)^2 + 2dL^2 + 1\Big) \norm{\bfX^{n+1}(t) - \bfX^n(t)}_{H_d}^2 \dt\\
&\qquad+ 2 \scp{ \bfX^{n+1}(t) - \bfX^n(t)}{\Big( \mathbf{B}\big(U^{n+1}(t), \bfX^{n+1}(t)\big) - \mathbf{B}\big(U^n(t), \bfX^n(t)\big)\Big) \,\mathrm{d}\mathbf{W}(t)}_{H_d}.\\
\end{align*}
Here, the additional terms are due to the multiplicative noise. Similar to \eqref{eq:FinalDiffUn} we need the exponential of $-G_t$ in the final estimate as follows.
\begin{align*}
&\EV{\int_0^T \Big( G_t - 4L^2 \sum_{i=1}^d \big(1 + \rho_i(R_t)\big)^2 - 2 dL^2 - 1 \Big) \mathrm{e}^{-G_t} \norm{\bfX^{n+1}(t) - \bfX^n(t)}_{H_d}^2 \dt}\\
&+ \EV{\sup_{t \in [0,T]} \mathrm{e}^{-G_t} \norm{\bfX^{n+1}(t) - \bfX^n(t)}_{H_d}^2} \leq \big(2 d L^2 +1\big) \EV{ \int_0^T \mathrm{e}^{-G_t} \norm{U^{n+1}(t) - U^n(t)}_H^2 \dt}\\
&+ \EV{2{\kern -0.5em}\sup_{t \in [0,T]} \abs{ \int_0^t \mathrm{e}^{-G_s} \scp{ \bfX^{n+1}(s) - \bfX^n(s)}{\Big( \mathbf{B}\big(U^{n+1}(s), \bfX^{n+1}(s)\big) - \mathbf{B}\big(U^n(s), \bfX^n(s)\big)\Big) \,\mathrm{d}\mathbf{W}(s)}_{H_d}}}.
\end{align*}
The supremum of the local martingale is controlled via the Burkholder-Davis-Gundy inequality by its quadratic variation. Denote, for the moment, the stochastic integral by $M_t$, then it is straightforward to obtain
\begin{align*}
&\langle M\rangle_T = \int_0^T \mathrm{e}^{-2 G_t} \norm{\bfX^{n+1}(t) - \bfX^n(t)}_{H_d}^2 \norm{ \mathbf{B}\big(U^{n+1}(t), \bfX^{n+1}(t)\big) - \mathbf{B}\big(U^n(t), \bfX^n(t)\big)}_{L_2(H_d)}^2 \dt\\
&\quad \leq 2dL^2 \sup_{t \in [0,T]} \mathrm{e}^{-G_t} \norm{\bfX^{n+1}(t) - \bfX^n(t)}_{H_d}^2\\
&\qquad \times \int_0^T \mathrm{e}^{-G_t}\Big( \norm{U^{n+1}(t) - U^n(t)}_H^2 + \norm{\bfX^{n+1}(t) - \bfX^n(t)}_{H_d}^2 \Big)\dt.
\end{align*}
Young's inequality allows to absorb both factors up to the difference $U^{n+1} - U^n$ by the left hand side and we obtain
\begin{equation}\label{eq:FinalDiffXn}
\EV{ \sup_{t \in [0,T]} \mathrm{e}^{-G_t}  \norm{\bfX^{n+1}(t) - \bfX^n(t)}_{H_d}^2}\leq C \int_0^T \EV{ \mathrm{e}^{-G_t} \norm{U^{n+1}(t) - U^n(t)}_H^2} \dt.
\end{equation}
Thus, \eqref{eq:FinalDiffUn} and \eqref{eq:FinalDiffXn} can be iterated in order to obtain an estimate independent of $n$. In more detail, using Fubini's theorem we can calculate as below
\begin{align*}
&\EV{ \sup_{t \in [0,T]} \mathrm{e}^{-G_t} \norm{U^{n+1}(t) - U^n(t)}_H^2} \leq \int_0^T \EV{\sup_{s \in [0,t]} \mathrm{e}^{-G_s} \norm{\bfX^n(s) - \bfX^{n-1}(s)}_{H_d}^2} \dt\\
&\quad \leq C \int_0^T \int_0^t \EV{ \sup_{r \in [0,s]} \mathrm{e}^{-G_r} \norm{U^n(r) - U^{n-1}(r)}_H^2} \ds \dt\\
&\quad = C \int_0^T (T-s) \EV{ \sup_{r \in [0,s]} \mathrm{e}^{-G_r} \norm{U^n(r) - U^{n-1}(r)}_H^2} \ds.\\
&\quad\leq C^2 \int_0^T \int_r^T (T-s)(s-r) \ds\, \EV{ \sup_{t \in [0,r]} \mathrm{e}^{-G_t} \norm{U^{n-1}(t) - U^{n-2}(t)}_H^2} \dr
\end{align*}
for $n\geq 2$. In general this involves integrals of the form
\begin{equation}\label{eq:IntegralRelation}
\int_r^T (T-s)^\alpha (s-r) \ds = \frac{(T-r)^{\alpha+2}}{(\alpha+1) (\alpha + 2)}.
\end{equation}
With this information the desired inequality is
\begin{equation}\label{eq:FinalRecursionIneq}
\begin{split}
&\EV{ \sup_{t \in [0,T]} \mathrm{e}^{-G_t} \norm{U^{n+1}(t) - U^n(t)}_H^2 + 2 \int_0^T \mathrm{e}^{-G_t} \norm{U^{n+1}(t) - U^n(t)}_V^2 \dt}\\
&\quad \leq \frac{C^n}{(2n-1)!} \int_0^T (T-t)^{2n+1} \EV{ \phantom{\Big|}{\kern -0.2em}\mathrm{e}^{-G_t} \norm{U^1(t) - U^0(t)}_H^2} \dt.
\end{split}
\end{equation}
By \eqref{eq:FinalDiffXn} we get a similar inequality for the differences of $\bfX^n$, thus Borel-Cantelli's lemma yields $\PP$-a.\,s. convergence of $U^n \to U$ on $C([0,T]; H) \cap L^2([0,T];V)$ as well as $\bfX^n \to \bfX$ in $C([0,T]; H_d)$. Also, all $L^\infty$-bounds are uniform in $n$, thus in particular $U \in \mathcal{U}$ and $\bfX \in \mathcal{X}$. This immediately yields an integrable dominating function for all of the integrals in \eqref{eq:Un} and \eqref{eq:Xn} involving $f$ and $f_i$, as well as for the quadratic variation of the multiplicative noise in \eqref{eq:Xn}. Thus, by Lebesgue's dominated convergence theorem $(U, \bfX)$ indeed solves \eqref{eq:SRDE} and the standard a priori estimates follow by Lemmas \ref{lem:ExforU} and \ref{lem:ExforX}.\qed

\section{Finite Difference Approximation}\label{sec:Approx}

In this second part we study spatial approximations of equation \eqref{eq:SRDE} needed for the numerical simulation of the neuronal dynamics using the well known finite difference method. The domain $\Dom$ is approximated by the equidistant grid $\frac1n \{0, \dots, n\}$ and the vectors $U^n, X_i^n \in \R^{n+1}$ denote the functions $U, X_i$ evaluated on this grid. For a compact notation we use bold symbols for matrices $\bfx \in \R^{d \times (n+1)}$ in this section, too. Furthermore, let $\tu$ denote the linear interpolation with respect to the space variable $x$ as
\[
\tu(x) \df (nx - k +1) u_k + (k -nx) u_{k-1},\quad x \in \left[\frac{k-1}{n}, \frac{k}{n}\right]
\]
of the vector $u \in \R^{n+1}$ together with zero outer normal derivative at the boundary points $x=0$ and $x=1$. Also, denote by $\tilde{\bfx}(x)$ the linear interpolation component-wise and by $\iota_n: V_n \to V; u \mapsto \tilde{u}$ the embedding into $V$ (or similarly $V_d$).

At every interior point of the grid we approximate the second derivative by
\begin{equation}\label{eq:DiscreteLaplace}
(A^n v)_k \df n^2 (v_{k+1} - 2 v_k + v_{k-1}),\quad 1 \leq k \leq n-1, v \in \R^{n+1}.
\end{equation}
It is standard to choose centered differences modeling the Neumann boundary condition in order to retain the order of convergence of the interior points. This introduces the artificial variables $v_{-1}, v_{n+1}$ and the discrete boundary condition reads as
\begin{equation}\label{eq:DiscreteBoundary}
\frac{n}{2}(v_1 - v_{-1}) = 0,\quad \frac{n}{2} (v_{n+1} - v_{n-1}) = 0.
\end{equation}
Together with \eqref{eq:DiscreteLaplace} for $k=0, n$ we can eliminate the artificial variables and obtain
\begin{equation}
(A^n v)_0 = 2 n^2 (v_1 - v_0),\quad (A^n v)_n = -2 n^2 (v_n - v_{n-1}).
\end{equation}
Note that $A^n$ is not symmetric with respect to the standard inner product as in the case of a Neumann boundary approximation of order $n^{-1}$. However, introduce the spaces $V_n \cong \R^{n+1}$ with norm
\[
\abs{v}_n^2 \df \frac{1}{2n} \big( v_0^2 + v_n^2 \big) + \frac{1}{n} \sum_{k=1}^{n-1} v_k^2
\]
and corresponding inner product $\scp{\cdot}{\cdot}_n$. Furthermore, we need the semi-norm
\[
\norm{v}_n^2 \df n \sum_{k=1}^n \big(v_k - v_{k-1}\big)^2.
\]
We use the same notation also for the matrices $\bfx$, meaning $\abs{\bfx}_n^2 = \sum_{i=1}^d \abs{\bfx_{i \cdot}}_n^2$ or with $\norm{\cdot}_n$ instead. With respect to $\scp{\cdot}{\cdot}_n$ the matrix $A^n$ is again symmetric and thus the following summation by parts formula holds.
\begin{equation}\label{eq:SummationByParts}
\scp{A^n v}{u}_n = - n \sum_{k=1}^n \big(v_k - v_{k-1}\big)\big(u_k - u_{k-1}\big),\quad \forall u,v \in V_n.
\end{equation}
In the next step let us construct the approximating noise in terms of the given realization of the driving cylindrical Wiener process $W$. Denote by $I_k \df ( \frac{2k-1}{2n}, \frac{2k+1}{2n})$ if $1 \leq k \leq n-1$ and $I_0 \df (0, \frac{1}{2n})$, $I_n \df (\frac{2n-1}{2n}, 1)$. Recall that
\begin{equation}\label{eq:BM}
\scp{W(t)}{\abs{I_k}^{-\frac12} \mathbbm{1}_{I_k}}_H \fd \beta^n_{k}(t), \quad 0 \leq k \leq n
\end{equation}
defines a family of $n+1$ iid real valued Brownian motions. Similarly we define the additional independent $d(n+1)$ Brownian motions $\{ \beta^n_{j,k}\}$ due to $\mathbf{W}$.  The spatial covariance structure given by the kernels $b$ and $b_i$ is discretized as follows,
\begin{align*}
b^n_{k,l} &\df \big(\abs{I_k}\abs{I_l}\big)^{-1} \int_{I_k}\int_{I_l} b(x, y) \dy \dx, \quad 0 \leq k,l \leq n \text{ and}\\
b^n_{i,k,l}(u, \bfx) &\df \big(\abs{I_k}\abs{I_l}\big)^{-1} \int_{I_k}\int_{I_l} b_i(\tu, \tilde{\bfx}, x, y) \dy \dx, \quad 0 \leq k,l \leq n.
\end{align*}
for $u \in V_n, \bfx \in \R^{d \times (n+1)}$. These discrete matrices allows to replace both $b$ and $b_i$ by a piecewise constant kernel, i.\,e. for $ 0\leq k \leq n$ and $x \in I_k$
\begin{align*}
&B W(t) \approx \sum_{l=0}^n b^n_{k,l} \scp{W(t)}{\mathbbm{1}_{I_l}} = \sum_{l=0}^n \abs{I_l}^{-\frac12} b^n_{k,l} \beta_l(t) \fd \Big( B^n P_n W(t)\Big)_k,
\end{align*}
where $P_n u = (\scp{u}{\abs{I_l}^{-\frac12} \mathbbm{1}_{I_l}})_{1 \leq l \leq n}$. In the same way we obtain $B_i^n(u,\bfx) P_n W_i(t)$, $1\leq i \leq d$. Denote by $W^n \df P_n W$ and $W_i^n \df P_n W_i$, $1\leq i\leq d$, the resulting $n$-dimensional Brownian motions and the finite dimensional system of stochastic differential equations approximating equation \eqref{eq:SRDE} is
\begin{equation}\label{eq:Approximation}
\begin{aligned}
\mathrm{d} U^n(t) &= \Big( A^n U^n(t) + f^n \big(U^n(t), \bfX^n(t)\big) \Big) \dt + B^n \, \mathrm{d} W^n(t), & U^n(0) &= P_n u_0,\\
\mathrm{d} X_i^n(t) &=  f_i\big(U^n(t),\bfX^n(t)\big) \dt + B^n_i\big(U^n(t), \bfX^n(t)\big) \, \mathrm{d} W_i^n(t), & X_i^n(0) &= P_n X_i(0).
\end{aligned}
\end{equation}

Standard results on stochastic differential equations imply the existence of a unique strong solution $(U^n(t),\bfX^n(t))$ to \eqref{eq:Approximation}, see e.\,g. \cite[Chapter 3]{prevot}. In order to compute the error made by using the approximate solution we embed $U^n$ and $X^n_i$ into $C([0,T]; V)$ by linear interpolation in the space variable as $(\tU^n, \tilde{\bfX}^n)$. We can now state the main result of this part.
\begin{thm}\label{thm:Approx}
Suppose the assumptions from Theorem \ref{thm:Ex+Unique} are satisfied and recall the definition of $G_t$ in \eqref{eq:ProcessG}. Define the error as $E^n(t) \df ( U(t) - \tU^n(t), \bfX(t) - \tilde{\bfX}^n(t))$, then there exists a constant $C_{\ref{thm:Approx}}$ such that
\begin{equation}\label{eq:NotSoStrongError}
\EV{ \sup_{t \in [0,T]} \mathrm{e}^{-G_t} \norm{E^n(t)}_{H_{d+1}}^2} \leq 2\EV{\norm{E^n(0)}_{H_{d+1}}^2} + \frac{C_{\ref{thm:Approx}}}{n^2}.
\end{equation}
\end{thm}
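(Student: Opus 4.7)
The plan is to bound the error through an It\^o formula argument on the weighted squared norm $\mathrm{e}^{-G_t} \norm{E^n(t)}_{H_{d+1}}^2$, proceeding in direct analogy with the fixed point iteration that established Theorem \ref{thm:Ex+Unique}. The exponential weight is essential: it absorbs the random but pathwise finite Lipschitz constants $L(1 + R_t^{r-1})(1+\rho_0)$ and $L(1+\rho_i(R_t))$ coming out of \eqref{eq:LocalLipschitz}, and it does so uniformly in $n$ provided the pathwise $L^\infty$-bound of Lemma \ref{lem:PathwiseBoundforU} is also available for $\tU^n$. The latter should follow from a discrete analogue of that proof once one checks that $A^n$ satisfies the required maximum principle, which in turn is a consequence of the structure \eqref{eq:DiscreteLaplace}--\eqref{eq:DiscreteBoundary}.

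For the $U$-component I would split
\[
U(t) - \tU^n(t) = \bigl( U(t) - \iota_n \pi_n U(t) \bigr) + \iota_n \bigl( \pi_n U(t) - U^n(t) \bigr),
\]
where $\pi_n$ is the canonical nodal interpolation $V \to V_n$. The first summand is a deterministic interpolation error whose squared $H$-norm is $O(n^{-2} \norm{U(t)}_V^2)$, so after integration in $t$ it is controlled by the a priori estimate of Lemma \ref{lem:ExforU}. To the second summand I apply It\^o's formula in $V_n$ with the weight $\mathrm{e}^{-G_t}$. The coercivity of $A^n$, obtained from \eqref{eq:SummationByParts}, delivers a negative gradient term that absorbs the drift cross terms arising from the local Lipschitz continuity of $f$. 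The comparison between the continuous and discrete Laplacians leaves a consistency error $(A - \iota_n A^n \pi_n) U$ of order $n^{-2}$ in $V^\ast$, and the comparison between $B$ and $\iota_n B^n P_n$ leaves another $O(n^{-2})$ contribution via the $W^{1,2}$-regularity of $b$ assumed in Assumption \ref{assum:Kernel}.

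For the $\bfX$-component I would mirror the estimate leading to \eqref{eq:FinalDiffXn}, working with $\mathrm{e}^{-G_t} \abs{\pi_n \bfX(t) - \bfX^n(t)}_n^2$. The drift contributes a linear coupling to the already-controlled $U$-error plus a self-similar term in the $\bfX$-error. The multiplicative noise produces, after Burkholder--Davis--Gundy, a quadratic variation that splits into a Lipschitz part (linear in both error components) and a kernel discretization part bounded by $C n^{-2}$ from the $W^{1,2}$-regularity and Lipschitz continuity of $b_i$. Young's inequality then absorbs the supremum term into the left hand side, exactly as in the passage between the two displays leading to \eqref{eq:FinalDiffXn}. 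A Gronwall application closes the argument and yields \eqref{eq:NotSoStrongError}.

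I expect the main obstacle to lie not in any single computation but in making sure every estimate is genuinely uniform in $n$ despite the non-global Lipschitz and polynomial growth of the nonlinearities. Concretely, the uniform pathwise bound on $\tU^n$ and the improved regularity of the approximating solution mentioned by the authors as Lemma \ref{lem:ImprovedAPriori} are both indispensable: the former to keep $\rho(\tilde{\bfX}^n)$ and $\rho_i(\tU^n)$ under control inside $G_t$ irrespective of $n$, and the latter to convert the formally $O(n^{-2})$ consistency error of the discrete Laplacian into an actual $n^{-2}$ bound rather than something only finite in $n$.
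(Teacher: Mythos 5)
Your overall machinery (It\^o's formula for the squared error, the exponential weight $\mathrm{e}^{-G_t}$ to absorb the random Lipschitz constants from \eqref{eq:LocalLipschitz}, the uniform $L^\infty$-bound for $\tU^n$, Burkholder--Davis--Gundy plus absorption for the martingale terms) matches the paper's, and you correctly single out Lemma \ref{lem:ImprovedAPriori} as indispensable. The genuine gap is in how you treat the Laplacian. Your decomposition $U - \tU^n = (U - \iota_n \pi_n U) + \iota_n(\pi_n U - U^n)$, with $\pi_n$ the nodal interpolation, shifts the consistency burden onto the \emph{exact} solution: the term $(A - \iota_n A^n \pi_n)U$ is $O(n^{-2})$ in $V^\ast$ only if $U$ has (roughly) $W^{2,2}$-regularity in space, and the interpolation error $\norm{U(t)-\iota_n\pi_n U(t)}_H^2 = O(n^{-2}\norm{U(t)}_V^2)$ sits inside a supremum over $t$, so it requires $\EV{\sup_{t\in[0,T]} \norm{U(t)}_V^2}<\infty$. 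Neither is available: Lemma \ref{lem:ExforU} only yields $\EV{\int_0^T\norm{U(t)}_V^2\dt}<\infty$, and the remark following it states explicitly that strong ($W^{2,2}$) regularity of $U$ is out of reach in this time-dependent coupled setting --- which is precisely why the authors never insert a projection of the exact solution.

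The paper instead compares $U$ directly with $\tU^n$ and measures the consistency error as $\norm{A\tU^n - \iota_n A^n U^n}_{V^\ast}$. Since $A\tU^n$ is a sum of point masses at the grid nodes with weights $\frac1n (A^n U^n)_k$ (with the obvious boundary modification), testing against $\phi\in V$ and writing $\phi(\tfrac kn)-\phi(x)=\int_x^{k/n}\phi'(y)\dy$ gives $\norm{A\tU^n-\iota_n A^n U^n}_{V^\ast}\leq \tfrac2n \abs{A^n U^n}_n$, so the entire regularity burden lands on the \emph{discrete} quantity $\abs{A^n U^n}_n$, which is exactly what Lemma \ref{lem:ImprovedAPriori} bounds uniformly in $n$ (after weighting by $\mathrm{e}^{-G_t}$). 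Unless you can supply the missing regularity of $U$ --- which the authors indicate they cannot --- your route does not close; replacing your consistency term $(A-\iota_n A^n\pi_n)U$ by the paper's comparison of $A\tU^n$ with $\iota_n A^n U^n$ is the missing idea. The rest of your outline (nonlinear drift, covariance discretization via the $W^{1,2}$-kernels, absorption of the stochastic integrals) is consistent with the paper's argument.
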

There is an immediate corollary using Borel-Cantelli's lemma.
\begin{cor}\label{cor:PathwiseRate}
For every $\eps \in (0,1)$ there exists a $\PP$-a.\,s. finite random variable $C_\eps(\omega)$ such that
\[
\sup_{t \in [0,T]} \norm{E^n(t)}_{H_{d+1}}^2 \leq \frac{C_\eps}{n^{1-\eps}} \quad \PP\text{-a.\,s.}
\]
\end{cor}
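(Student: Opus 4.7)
The plan is to extract a pathwise rate from the $L^1$-bound in Theorem \ref{thm:Approx} by a routine Markov--Borel--Cantelli argument, then convert the weighted norm to the unweighted one using that $\mathrm{e}^{G_T}$ is $\PP$-a.s.\ finite. Fix $\eps \in (0,1)$, set $\delta \df \eps/2$, and abbreviate $Y_n \df \sup_{t \in [0,T]} \mathrm{e}^{-G_t} \norm{E^n(t)}_{H_{d+1}}^2$. Under the standing regularity on the initial data, the projection error $\EV{\norm{E^n(0)}_{H_{d+1}}^2}$ is $O(n^{-2})$, so Theorem \ref{thm:Approx} yields $\EV{Y_n} \leq C n^{-2}$.

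First I would apply Markov's inequality at the threshold $n^{-(1-\delta)}$ to obtain
\[
\PP\bigl(Y_n > n^{-(1-\delta)}\bigr) \leq n^{1-\delta} \EV{Y_n} \leq C n^{-(1+\delta)}.
\]
Since $\sum_n n^{-(1+\delta)} < \infty$, Borel--Cantelli produces a $\PP$-a.s.\ finite index $N(\omega)$ beyond which $Y_n(\omega) \leq n^{-(1-\delta)}$. The random variable
\[
\tilde{C}_\eps(\omega) \df \sup_{n \geq 1} n^{1-\delta} Y_n(\omega)
\]
is therefore $\PP$-a.s.\ finite, as only finitely many indices sit below $N(\omega)$ and each $Y_n$ is finite by construction.

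To drop the weight, observe that by Lemma \ref{lem:PathwiseBoundforU} the dominating process $R_t$ is nondecreasing in $t$ and has Gaussian moments, so $R_T < \infty$ $\PP$-a.s. Combined with the bound $\rho_i(R_s) \leq \mathrm{e}^{\alpha R_s}$ from Assumption \ref{assum:Drift}, all integrands in the definition \eqref{eq:ProcessG} of $G_t$ are pathwise bounded on $[0,T]$, whence $\mathrm{e}^{G_T} < \infty$ $\PP$-a.s. Setting $C_\eps(\omega) \df \mathrm{e}^{G_T(\omega)} \tilde{C}_\eps(\omega)$ then gives
\[
\sup_{t \in [0,T]} \norm{E^n(t)}_{H_{d+1}}^2 \leq \mathrm{e}^{G_T} Y_n \leq \frac{C_\eps}{n^{1-\eps/2}} \leq \frac{C_\eps}{n^{1-\eps}},
\]
which is the claim. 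No substantive difficulty arises: this is the classical device for upgrading an $L^1$-rate to a $\PP$-a.s.\ rate at the cost of an arbitrarily small exponent loss $\eps > 0$. The only points worth a brief verification are the summability of the Markov tails, which drives the Borel--Cantelli step, and the $\PP$-a.s.\ finiteness of the weight $\mathrm{e}^{G_T}$, both immediate from results already established earlier in the excerpt.
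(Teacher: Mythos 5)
Your argument is correct and is precisely the route the paper intends (it gives no written proof beyond the remark that the corollary is ``immediate using Borel--Cantelli's lemma''): Markov's inequality at the threshold $n^{-(1-\sfrac{\eps}{2})}$, summability of the resulting tails, Borel--Cantelli, and removal of the weight via the $\PP$-a.\,s. finiteness of $\mathrm{e}^{G_T}$, which indeed follows from $R_T<\infty$ $\PP$-a.\,s. and $\rho_i(u)\leq\mathrm{e}^{\alpha\abs{u}}$. The one point to flag is your claim that $\EV{\norm{E^n(0)}_{H_{d+1}}^2}=O(n^{-2})$: this requires a quantitative modulus for the initial data (e.\,g. $u_0\in V$ with $\EV{\norm{u_0}_V^2}<\infty$), which is not literally contained in the hypotheses of Theorem \ref{thm:Approx} --- though the paper's own statement of the corollary silently makes the same assumption, so this is not a defect of your argument relative to the source.
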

\begin{rem}
Possible generalizations of this theorem include error estimates in $L^p$ for general $2 \leq p < \infty$ as in e.\,g. \cite[Theorem 3.1]{SauerLattice}. In principle this is only a technical matter and involves more general a priori estimates for $U^n$ and $\bfX^n$ than the ones obtained here, but no new techniques or ideas. Having \eqref{eq:NotSoStrongError} for all finite $p$ would also imply a pathwise convergence rate of almost $\sfrac1n$, see e.\,g. \cite[Lemma 2.1]{KloedenNeuenkirch}.
\end{rem}
Theorem \ref{thm:Approx} does not yield a strong convergence rate, because $\exp[G_t]$ is not necessarily integrable. However, if this is the case we can easily deduce a strong convergence rate of $\sfrac1n$ in $L^p$, $p \leq p^\ast < 2$ (with $p^\ast$ depending on the integrability of $G_t$) by H\"older's inequality. Another case in which one can say more about the strong convergence rate is the one, when the drift is one-sided Lipschitz, or in other words quasi-monotone, see \cite{SauerLattice}. We prove another theorem under the following additional assumptions on $f, f_i$.
\begin{assum}\label{assum:Monotone}
Let $f$ and $\mathbf{f} = (f_i)_{1 \leq i \leq d}$ satisfy
\[
\vek{f(u, \bfx) - f(v, \bfy)}{ \mathbf{f}(u, \bfx) - \mathbf{f}(v, \bfy)}\vek{u - v}{\bfx - \bfy} \leq L \big( \abs{u-v}^2 + \abs{\bfx - \bfy}^2 \big)
\]
for some $L > 0$ and all $u,v \in \R$, $\bfx, \bfy \in \R^d$.
\end{assum}
\begin{thm}\label{thm:Monotone}
With the additional Assumption \ref{assum:Monotone}, there exists a constant $C_{\ref{thm:Monotone}}$ such that
\begin{equation}\label{eq:StrongError}
\EV{ \sup_{t \in [0,T]} \norm{E^n(t)}_{H_{d+1}}^2} \leq 2 \mathrm{e}^{LT}\EV{ \norm{E^n(0)}_{H_{d+1}}^2} + \frac{C_{\ref{thm:Monotone}}}{n^2}.
\end{equation}\end{thm}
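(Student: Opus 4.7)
The plan is to follow the same overall strategy as the proof of Theorem \ref{thm:Approx}, with the crucial simplification that Assumption \ref{assum:Monotone} furnishes a single deterministic one-sided Lipschitz constant $L$ for the joint drift $(f,\mathbf{f})$. This bypasses the pathwise weight $\mathrm{e}^{-G_t}$, whose role in Theorem \ref{thm:Approx} was precisely to absorb the random Lipschitz constants $L(1+R_t^{r-1})(1+\rho_0)$ and $L(1+\rho_i(R_t))$, and allows a standard, deterministic Gronwall argument that produces the factor $\mathrm{e}^{LT}$ appearing in \eqref{eq:StrongError}.

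More concretely I would write the error equations for $e(t) \df U(t) - \tU^n(t)$ and $\bfe_i(t) \df X_i(t) - \tilde{X}_i^n(t)$ by subtracting the embedded discrete system \eqref{eq:Approximation} from \eqref{eq:SRDE}, and then apply It\^o's formula to $\norm{E^n(t)}_{H_{d+1}}^2$ in the Gelfand triple $V \hookrightarrow H \hookrightarrow V^\ast$ for the first coordinate and in $H_d$ for the remaining ones. The resulting drift contributions split into (i) the bilinear form of $A$ acting on $e$ together with the consistency error of $A^n$ versus $A$, controlled by \eqref{eq:SummationByParts} and the standard interpolation estimates; (ii) nonlinear drift discrepancies which, after inserting the pivot $\bigl(f(\tU^n,\tbfX^n), \mathbf{f}(\tU^n,\tbfX^n)\bigr)$, decompose into a Lipschitz half bounded by $L \norm{E^n(t)}_{H_{d+1}}^2$ via Assumption \ref{assum:Monotone}, and a pure discretization half of order $n^{-1}$ in $L^2(\Omega;H)$; and (iii) an It\^o correction together with a local martingale coming from $B - \iota_n B^n P_n$ and $B_i(U,\bfX) - \iota_n B_i^n(U^n,\bfX^n) P_n$, whose Hilbert-Schmidt norms are of order $n^{-1}$ thanks to Assumption \ref{assum:Kernel}. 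The supremum of the martingale is handled by Burkholder-Davis-Gundy followed by Young's inequality so as to absorb $\tfrac12\EV{\sup_{t\in[0,T]}\norm{E^n(t)}_{H_{d+1}}^2}$ on the left hand side; a deterministic Gronwall inequality with constant $L$ then closes the argument.

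The principal technical obstacle is the bookkeeping of the four families of discretization errors (linear operator, nonlinear drift, covariance kernels, and initial projection) and verifying that each is $\mathcal{O}(n^{-2})$ in expectation. This relies on the $n$-independent moment estimates for $(U^n,\bfX^n)$ announced in Lemma \ref{lem:ImprovedAPriori}, on the pathwise $L^\infty$-bounds that transfer from Lemmas \ref{lem:PathwiseBoundforU} and \ref{lem:InvarianceX} to the discrete solutions, and on interpolation estimates of the form $\norm{u - \iota_n P_n u}_H \leq C n^{-1} \norm{u}_V$. All of these ingredients are already required for Theorem \ref{thm:Approx}, so the only genuinely new element in the present proof is the observation that under Assumption \ref{assum:Monotone} the random prefactors disappear and the Gronwall step becomes deterministic, upgrading the pathwise rate of Corollary \ref{cor:PathwiseRate} to the strong rate $1/n$.
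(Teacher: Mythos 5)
Your overall strategy is the right one and matches the paper's in outline, but there is a concrete gap in step (ii) and in the closing a priori estimates. The consistency error of the nonlinear drift, $\scp{f(\tU^n,\tbfX^n)-\iota_n f(U^n,\bfX^n)}{U-\tU^n}_H$ (and its analogue for $\mathbf{f}$), is \emph{not} of order $n^{-1}$ with a deterministic constant: on each cell it is controlled only through the local Lipschitz bound \eqref{eq:LocalLipschitz}, whose constants $L(1+R_t^{r-1})(1+\rho_0)$ and $L(1+\rho_i(R_t))$ are random and unbounded. Assumption \ref{assum:Monotone} removes the random prefactor only from the ``Lipschitz half'' $\scp{(f,\mathbf{f})(U,\bfX)-(f,\mathbf{f})(\tU^n,\tbfX^n)}{E^n}_{H_{d+1}}$; the discretization half survives as a term of the form $\frac{K}{n^2}\,G_t\big(\norm{U^n}_n^2+\norm{\bfX^n}_n^2\big)$. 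After your deterministic Gronwall step you are therefore left with
\begin{equation*}
\frac{K}{n^2}\,\EV{\int_0^T G_t\big(\norm{U^n(t)}_n^2+\norm{\bfX^n(t)}_n^2\big)\dt}
\qquad\text{and}\qquad
\frac{1}{n^2}\,\EV{\int_0^T \abs{A^n U^n(t)}_n^2\dt},
\end{equation*}
neither of which is controlled by Lemma \ref{lem:ImprovedAPriori}: that lemma only bounds these quantities \emph{with the weight} $\mathrm{e}^{-G_t}$ inside, and only at the level of first moments of the squared norms. So your assertion that ``all of these ingredients are already required for Theorem \ref{thm:Approx}'' is where the argument breaks.

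What is genuinely needed in addition — and what the paper supplies as Corollary \ref{cor:ImprovedAPriori} — are \emph{unweighted} a priori estimates $\EV{\int_0^T\abs{A^n U^n(t)}_n^2\dt}<\infty$ and $\EV{\sup_{t}(\norm{U^n(t)}_n^2+\norm{\bfX^n(t)}_n^2)^p}<\infty$ for all $p<\infty$, uniformly in $n$. These are obtained by rerunning the proof of Lemma \ref{lem:ImprovedAPriori} but using Assumption \ref{assum:Monotone} on the increments $(U^n_k-U^n_{k-1},\bfX^n_k-\bfX^n_{k-1})$ so that the exponential weight never enters, followed by Burkholder--Davis--Gundy and Gronwall at the power $p$. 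Only then does H\"older's inequality, combined with the Gaussian moments of $G_t$ from Lemma \ref{app:Lem1}, reduce the term $\EV{\int_0^T G_t(\norm{U^n}_n^2+\norm{\bfX^n}_n^2)\dt}$ to an $n$-independent constant and yield the rate $n^{-2}$ in \eqref{eq:StrongError}. You should add this step explicitly; without it the right-hand side of your Gronwall inequality is not known to be finite.
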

The proofs of these theorems are contained in the following subsections.

\subsection{Uniform A Priori Estimates}
In addition to the a priori estimates on $(U, \bfX)$ from Theorem \ref{thm:Ex+Unique} uniform a priori estimates for $(U^n, \bfX^n)$ are essential for the proof of Theorem \ref{thm:Approx}. Let us start with a statement concerning the $L^\infty$-bounds.
\begin{lem}\label{lem:BoundApprox}
Under the assumptions of Theorem \ref{thm:Approx}, $\tilde{\bfX}^n \in \mathcal{X}$ and $\tilde{U}^n \in \mathcal{U}$ with the same uniform bound $R_t$ as for $U$.
\end{lem}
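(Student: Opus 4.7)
The strategy is to reproduce the two pathwise bounds of Lemmas~\ref{lem:PathwiseBoundforU} and~\ref{lem:InvarianceX} at the discrete level, relying on the discrete summation-by-parts formula~\eqref{eq:SummationByParts} and on a uniform-in-$n$ control of the discrete Ornstein--Uhlenbeck convolution.

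For the invariance $\tilde{\bfX}^n\in\mathcal{X}$ I would apply the finite-dimensional It\^o formula to $|\phi_{j,\eps}(\bfX^n(t))|_n^2$ for $j=0,1$, with $\phi_{j,\eps}$ the smooth normal contractions from Lemma~\ref{lem:InvarianceX} applied componentwise at every node. Letting $\eps\to 0$ eliminates the It\^o correction (since $\phi_{j,\eps}''\phi_{j,\eps}\to 0$) and the stochastic integral (the factor $\phi_{j,\eps}'(X^n_{i,k})$ is supported on $\{X^n_{i,k}\notin[0,1]\}$, where the indicator $\mathbbm{1}_{\{0\leq\bfx\leq 1\}}$ baked into $B_i$ makes $B^n_i$ vanish there). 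The drift term is non-positive nodewise by Assumption~\ref{assum:Invariance}, so $|\phi_j(\bfX^n(t))|_n^2\leq|\phi_j(\bfX^n(0))|_n^2=0$. Hence $0\leq X^n_{i,k}(t)\leq 1$ at every grid point, and since the extrema of a piecewise linear function are attained at its nodes, the bounds carry over to $\tilde{\bfX}^n$.

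For the bound on $\tilde{U}^n$ I mimic Lemma~\ref{lem:PathwiseBoundforU}. Introduce the discrete Ornstein--Uhlenbeck process $Y^n$ solving $\mathrm{d}Y^n(t)=A^n Y^n(t)\dt+B^n\,\mathrm{d}W^n(t)$ with $Y^n(0)=0$, and set $Z^n\df U^n-Y^n$, which satisfies the pathwise ODE $\partial_t Z^n=A^n Z^n+f^n(Z^n+Y^n,\bfX^n)$. With $\phi_\eps$ applied nodewise and the summation-by-parts identity~\eqref{eq:SummationByParts},
\[
\langle A^n Z^n,\phi_\eps'(Z^n)\phi_\eps(Z^n)\rangle_n = -n\sum_{k=1}^n (Z^n_k-Z^n_{k-1})\bigl(g(Z^n_k)-g(Z^n_{k-1})\bigr)\leq 0,
\]
because $g\df\phi_\eps'\phi_\eps=\tfrac12(\phi_\eps^2)'$ is non-decreasing (as $\phi_\eps^2$ is convex). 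The reaction term is handled exactly as in the continuous case: the nodewise bound $\bfX^n\in[0,1]^d$ from the previous step allows the splitting trick of Lemma~\ref{lem:PathwiseBoundforU} to render the integrand non-positive on $\{Z^n\leq -R_t\}$. Taking $\eps\to 0$ and running the symmetric upper bound then yields $\max_k|Z^n_k(t)|\leq\|u_0\|_E+R+R^Y_t$, hence the same bound for the linear interpolant.

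The only step that is not a straight transcription of the continuous argument is the uniform-in-$n$ pathwise estimate $\max_k|Y^n_k(t)|\leq R^Y_t$ (or at least a Gaussian random variable with $n$-independent moments), and this is where I expect the main difficulty. I would derive it from Lemma~\ref{app:Lem1} applied to the piecewise-constant kernel $b^n$ associated with $B^n$, using that $\|b^n\|_{W^{1,2}(\Dom^2)}$ is uniformly bounded by $\|b\|_{W^{1,2}(\Dom^2)}$. Granting this, $\|\tilde{U}^n(t)\|_\infty=\max_k|U^n_k(t)|\leq\max_k|Z^n_k(t)|+\max_k|Y^n_k(t)|\leq\|u_0\|_E+R+2R^Y_t=R_t$ $\PP$-a.\,s., so $\tilde{U}^n\in\mathcal{U}$ with the very same $R_t$ as for $U$.
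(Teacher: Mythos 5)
Your outline is essentially the paper's proof: the invariance of $[0,1]^d$ for $\bfX^n$ is obtained by running the argument of Lemma \ref{lem:InvarianceX} nodewise (the paper simply remarks that all arguments there are pointwise in $x$ and hence apply on the grid), and the $L^\infty$-bound for $U^n$ is obtained by repeating the proof of Lemma \ref{lem:PathwiseBoundforU} with $Z^n = U^n - Y^n$ and the same cut-off. Your explicit verification that the discrete Dirichlet form term is non-positive, via the monotonicity of $\phi_\eps'\phi_\eps$ and the summation-by-parts formula \eqref{eq:SummationByParts}, is a detail the paper leaves implicit and is correct.

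The one place where your mechanism falls short of what the lemma asserts is the control of $Y^n$. Applying Lemma \ref{app:Lem1} to the piecewise-constant kernel $b^n$ with the uniform bound $\norm{b^n}_{W^{1,2}(\Dom^2)} \leq \norm{b}_{W^{1,2}(\Dom^2)}$ only yields, for each $n$, a bounding random variable with $n$-independent Gaussian moments; it does not give the \emph{pathwise} domination $\sup_{s\in[0,t]}\max_k \abs{Y^n_k(s)} \leq R_t^Y$ by the \emph{same} random variable. That stronger, pathwise statement is what the lemma claims (``the same uniform bound $R_t$ as for $U$'') and what is actually used downstream, since the single process $G_t$ built from $R_t$ must control the local Lipschitz constants along $U$ and along every $U^n$ simultaneously in Lemma \ref{lem:ImprovedAPriori} and in the proof of Theorem \ref{thm:Approx}. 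The paper closes this gap in Lemma \ref{app:Lem2}: writing $(B^nW^n(t))_k$ as an average of $\scp{\pi^n b(x,\cdot)}{W(t)}_H$ over $I_k$ and using that the orthogonal projection $\pi^n$ is a contraction on $H$ gives $\norm{B^nW^n}_{C^\eta([0,T];(\R^{n+1},\norm{\cdot}_{\max}))} \leq \norm{BW}_{C^\eta([0,T];E)} = \zeta$ pathwise, so the factorization argument of Lemma \ref{app:Lem1} bounds $Y^n$ by the same $C_A\zeta$ for every $n$. You correctly flagged this as the main difficulty, but the route you propose would need to be replaced by (or supplemented with) this pathwise comparison.
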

\begin{proof}
Obviously $\tilde{U}^n \in C([0,T], C_b^1(\overline{\Dom}, \R))$. We can apply Lemma \ref{lem:InvarianceX} or rather imitate its proof to obtain $\tilde{\bfX}^n \in \mathcal{X}$ for all $n \in \N$. Since all arguments are point-wise in $x \in \Dom$, they also apply for the finite subset $\frac{1}{n} \{0, \dots, n\}$.

Concerning the uniform bound, note that for the solution to
\[
\mathrm{d}Y^n(t) = A^n Y^n(t) \dt + B^n \,\mathrm{d}W^n(t), \quad Y^n(0)=0,
\]
it holds that $R_t^{Y^n} \df \sup_{s \in [0,t]} \max_{0\leq k\leq n} \abs{Y^n_k(s)} \leq R_t^Y$ by Lemma \ref{app:Lem2}. In particular, the right hand side is independent of $n$ and we can apply the proof of Lemma \ref{lem:PathwiseBoundforU} with the same uniform cut-off.
\end{proof}
Next, we derive an improved a priori estimate providing more than the canonical regularity for $U^n$. This is similar to the question of $U$ being a strong solution to \eqref{eq:SPDEforU}, in particular if $U \in D(A)$.
\begin{lem}\label{lem:ImprovedAPriori}
With the assumptions of Theorem \ref{thm:Approx} and arbitrary $n \in \N$ it holds that
\begin{equation}\label{eq:ImprovedAPriori}
\begin{split}
&\EV{ \sup_{t \in [0,T]} \mathrm{e}^{-G_t} \big( \norm{U^n(t)}_n^2 + \norm{\bfX^n(t)}_n^2 \Big) + 2 \int_0^T \mathrm{e}^{-G_t} \abs{A^n U^n(t)}_n^2 \dt}\\
&\quad\leq K \EV{ \Big( \norm{U^n(0)}_n^2 + \norm{\bfX^n(0)}_n^2 + T \norm{B}_{L_2(H,V)}^2\Big)}.
\end{split} 
\end{equation}
\end{lem}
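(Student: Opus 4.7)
The plan is to apply It\^o's formula in the finite-dimensional setting to $\norm{v}_n^2 = -\scp{A^n v}{v}_n$ for both $U^n$ and each $X_i^n$, exploiting the discrete summation by parts \eqref{eq:SummationByParts}. For $U^n$, the drift contributes $-2\scp{A^n U^n}{A^n U^n + f^n(U^n,\bfX^n)}_n\dt$: the first summand yields $-2\abs{A^n U^n}_n^2\dt$ which, after Young's inequality absorbs the cross term, supplies the positive $\abs{A^n U^n}_n^2$ on the left-hand side of \eqref{eq:ImprovedAPriori}. The cost is a bound on $\abs{f^n(U^n,\bfX^n)}_n^2$, which by Lemma \ref{lem:BoundApprox} and Assumption \ref{assum:Drift} is pointwise dominated by $L^2(1+R_t^{r-1})^2(1+\rho_0)^2 \leq \tfrac12 G_t'$. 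The It\^o correction produces $\norm{B^n}_{L_2(H_n,V_n)}^2\dt$, bounded by $\norm{B}_{L_2(H,V)}^2$ via a standard discrete-continuous kernel comparison, and the stochastic integral $-2\int_0^\cdot \mathrm{e}^{-G_s}\scp{A^n U^n}{B^n\,\mathrm{d}W^n}_n$ is handled with the Burkholder-Davis-Gundy inequality together with $\abs{(B^n)^\ast A^n U^n}_n \leq \norm{B^n}_{L_2(H_n,H_n)}\abs{A^n U^n}_n$; Young's inequality then reabsorbs a small fraction of $\int_0^T \mathrm{e}^{-G_t}\abs{A^n U^n}_n^2\dt$ into the left-hand side.

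For each $X_i^n$ there is no $A^n$ in the drift, so summation by parts instead produces
\[
-2\scp{A^n X_i^n}{f_i(U^n,X_i^n)}_n = 2n\sum_{k=1}^n \big(X_{i,k}^n - X_{i,k-1}^n\big)\big(f_i(U^n,X_i^n)_k - f_i(U^n,X_i^n)_{k-1}\big),
\]
and the local Lipschitz estimate \eqref{eq:LocalLipschitz} applied to the discrete differences combined with Cauchy-Schwarz bounds this by $C(R_t)(\norm{U^n}_n^2+\norm{X_i^n}_n^2)$. The It\^o correction $\norm{B_i^n(U^n,\bfX^n)}_{L_2(H_n,V_n)}^2$ is controlled via Assumption \ref{assum:Kernel} together with the comparison $\norm{\tilde v}_V^2 \leq C(\abs{v}_n^2+\norm{v}_n^2)$ for piecewise linear interpolation and the uniform $L^\infty$-bound from Lemma \ref{lem:BoundApprox}, yielding a bound of the form $C(R_t) + C(\norm{U^n}_n^2+\norm{\bfX^n}_n^2)$. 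For the corresponding stochastic integral, a summation by parts in each noise mode produces a quadratic variation bounded by $\norm{X_i^n}_n^2 \norm{B_i^n}_{L_2(H_n,V_n)}^2$, so that BDG followed by Young's inequality absorbs a small multiple of $\sup_t \mathrm{e}^{-G_t}\norm{X_i^n}_n^2$ into the left-hand side.

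Setting $\Phi(t) \df \norm{U^n(t)}_n^2+\norm{\bfX^n(t)}_n^2$ and applying the product rule to $\mathrm{e}^{-G_t}\Phi(t)$, the choice of $G_t$ in \eqref{eq:ProcessG} ensures that the negative drift $-G_t' \mathrm{e}^{-G_t}\Phi(t)\dt$ dominates the bad contribution $C(R_t)\mathrm{e}^{-G_t}\Phi(t)\dt$ accumulated above. The remaining inhomogeneity is integrable since $C(R_t) \leq G_t'$ implies $\int_0^T \mathrm{e}^{-G_t} C(R_t)\dt \leq \int_0^T \mathrm{e}^{-G_t} G_t'\dt = 1-\mathrm{e}^{-G_T} \leq 1$, yielding an absolute constant. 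A Gronwall argument applied to $F(t) \df \EV{\sup_{s\leq t}\mathrm{e}^{-G_s}\Phi(s)}$ then closes the estimate in the form claimed in \eqref{eq:ImprovedAPriori}, the $T\norm{B}_{L_2(H,V)}^2$ contribution arising from the It\^o correction of $U^n$.

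The main technical obstacle, I expect, is the treatment of the $\bfX^n$-martingale: since no $\abs{A^n X_i^n}_n^2$ term appears on the left-hand side, its quadratic variation cannot be absorbed as in the $U^n$-case, and one must reshape it via summation by parts to extract a factor of $\norm{X_i^n}_n^2$. The subsequent Gronwall closure must then carefully exploit the bound on $\norm{B_i^n}_{L_2(H_n,V_n)}^2$ in terms of $\Phi$ itself, with the $\mathrm{e}^{-G_t}$ weight being essential to keep the $R_t$-dependent constants integrable without invoking any further a priori moments.
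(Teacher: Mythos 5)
Your proposal follows the paper's proof in all essentials: It\^o's formula for the discrete $H^1$-seminorm $\norm{\cdot}_n^2=-\scp{A^n\cdot}{\cdot}_n$ of $U^n$ and of each $X_i^n$, the summation by parts formula \eqref{eq:SummationByParts} to produce $-2\abs{A^nU^n}_n^2$, the local Lipschitz bounds \eqref{eq:LocalLipschitz} combined with the uniform $L^\infty$-bounds of Lemma \ref{lem:BoundApprox}, the $W^{1,2}$-kernel estimates of Assumption \ref{assum:Kernel} for the It\^o corrections, and Burkholder--Davis--Gundy plus absorption under the $\mathrm{e}^{-G_t}$ weight. The only deviations are tactical and harmless: the paper estimates the nonlinear drift via the Lipschitz bound on the discrete differences $f(U^n_k,\bfX^n_k)-f(U^n_{k-1},\bfX^n_{k-1})$ rather than sacrificing part of $\abs{A^nU^n}_n^2$ through Young's inequality (which is why it retains the full factor $2$ in \eqref{eq:ImprovedAPriori}), and it controls the quadratic variation of the $U^n$-martingale through the kernel differences $b^n_{k,l}-b^n_{k-1,l}$, i.e. by $\norm{U^n}_n^2\norm{B}_{L_2(H,V)}^2$, rather than through $\abs{A^nU^n}_n$.
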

\begin{proof}
Consider It\^o's formula applied to $\norm{U^n(t)}_n^2$
\begin{align*}
\mathrm{d} \norm{U^n(t)}_n^2 &= 2n \sum_{k=1}^n \big( U^n_k(t) - U^n_{k-1}(t) \big) \Big( \big(A^n U^n(t)\big)_k - \big(A^n U^n(t)\big)_{k-1}\Big) \dt\\
&\quad + 2n \sum_{k=1}^n \big( U^n_k(t) - U^n_{k-1}(t) \big) \Big( f\big(U^n_k(t), \bfX^n_k(t) \big) - f \big(U^n_{k-1}(t), \bfX^n_{k-1}(t) \big) \Big) \dt\\
&\quad + 2 \sqrt{n} \sumkl \big(U^n_k(t) - U^n_{k-1}(t)\big) \big(b_{k,l}^n - b_{k-1,l}^n\big) \,\mathrm{d} \beta_l^n(t) + \sumkl \big( b^n_{k,l} - b^n_{k-1,l}\big)^2 \dt.\\
\intertext{With the summation by parts formula \eqref{eq:SummationByParts} for the linear part and inequality \eqref{eq:LocalLipschitz} for $f$ it is straightforward to obtain}
&\leq \Big( - 2 \abs{A^n U^n(t)}_n^2 + \norm{\bfX^n(t)}_n^2 + \big( L^2 \big(1 + R_t^{r-1}\big)^2 (1 + \rho_0)^2 + 1\big) \norm{U^n(t)}_n^2 \Big.\\
&\qquad \Big. + \norm{B}_{L_2(H,V)}^2 \Big) \dt + 2 \sqrt{n} \sumkl \big(U^n_k(t) - U^n_{k-1}(t)\big) \big(b_{k,l}^n - b_{k-1,l}^n\big) \,\mathrm{d} \beta_l^n(t).
\end{align*}
The norm of $B$ in $L_2(H,V)$ appears naturally as described below in detail for the multiplicative noise in the $\bfX$-variable. The estimate above involves $\bfX^n$ in a stronger norm, thus in a similar fashion apply It\^o's formula to $\norm{X_i^n(t)}_n^2$, $1 \leq i \leq d$.
\begin{align*}
&\mathrm{d} \norm{X^n_i(t)}_n^2 = 2n \sum_{k=1}^n \big( X^n_{i,k}(t) - X^n_{i, k-1}(t)\big) \Big( f_i \big( U^n_k(t), \bfX^n_k(t)\big) - f_i\big( U^n_{k-1}(t), \bfX^n_{k-1}(t)\big)\Big)\dt\\
&\quad + 2 \sqrt{n} \sumkl \big( X^n_{i,k}(t) - X^n_{i, k-1}(t)\big) \Big( b^n_{i,k,l} \big( U^n(t), \bfX^n(t)\big) - b^n_{i,k-1,l}\big( U^n(t), \bfX^n(t)\big)\Big) \,\mathrm{d}\beta_{i,l}^n(t)\\
&\quad + \sumkl \Big( b^n_{i,k,l}\big(U^n(t), \bfX^n(t)\big) - b^n_{i,k-1,l}\big(U^n(t), \bfX^n(t)\big)\Big)^2 \dt.
\end{align*}
Again \eqref{eq:LocalLipschitz} and the linear growth condition on $b_i(\tU^n, \tilde{\bfX}^n) \in W^{1,2}(\Dom^2)$, in particular
\begin{align*}
&\sumkl {\kern 0.25em} \int_{I_k} {\kern -0.4em}\int_{I_{k-1}} {\kern -0.35em}\int_{I_l}{\kern -0.4em} \frac{\big( b_i \big(\tU^n(t,x), \tilde{\bfX}^n(t,x), x, y\big) - b_i\big(\tU^n(t,x'), \tilde{\bfX}^n(t,x'), x', y\big)\big)^2}{\abs{I_k} \abs{I_{k-1}} \abs{I_l}} \dy \dx' \dx\\
&\quad \leq \sumkl {\kern 0.25em}\int_{I_k \cap I_{k-1}} {\kern -0.75em}\Big(\partial_z (b_i(\tU^n(t,z), \tilde{\bfX}^n(t,z), z, y))\Big)^2 \dz \dy \leq \norm{b_i \big(\tU^n(t), \tilde{\bfX}^n(t)\big)}_{W^{1,2}(\Dom^2)}^2
\end{align*}
imply
\begin{align*}
&\mathrm{d} \norm{\bfX^n(t)}_n^2 \leq \Big( \Big( 4L^2 \sum_{i=1}^d \big(1 + \rho_i(R_t)\big)^2 + 2dL^2 + 1\Big) \norm{\bfX^n(t)}_n^2 + (2dL^2 + 1) \norm{U^n(t)}_n^2\Big) \dt\\
& + 2 \sqrt{n} \sum_{i=1}^d\sumkl \big( X^n_{i,k}(t) - X^n_{i, k-1}(t)\big) \Big( b^n_{i,k,l} \big( U^n(t), \bfX^n(t)\big) - b^n_{i,k-1,l}\big( U^n(t), \bfX^n(t)\big)\Big) \,\mathrm{d}\beta_{i,l}^n(t)
\end{align*}
Recall the definition of $G_t$ in \eqref{eq:ProcessG} and combine the inequalities above to
\begin{align*}
&\EV{ \sup_{t \in [0,T]} \mathrm{e}^{-G_t} \big( \norm{U^n(t)}_n^2 + \norm{\bfX^n(t)}_n^2 \Big) + 2 \int_0^T \mathrm{e}^{-G_t} \abs{A^n U^n(t)}_n^2 \dt}\\
&\quad\leq \EV{ \Big( \norm{U^n(0)}_n^2 + \norm{\bfX^n(0)}_n^2 + T \norm{B}_{L_2(H,V)}^2\Big) + \sup_{t \in [0,T]} \abs{M^U(t)} + \sup_{t \in [0,T]} \abs{M^{\bfX}(t)}},
\end{align*}
where the two stochastic integrals are denoted by $M^U$ and $M^{\bfX}$, respectively. The Burkholder-Davis-Gundy inequality and some standard estimates yield
\[
\EV{\sup_{t \in [0,T]} \abs{M^U(t)}} \leq \frac12 \sup_{t \in [0,T]} \mathrm{e}^{-G_t} \norm{U^n(t)}_n^2 + K T \norm{B}_{L_2(H,V)}^2
\]
and similarly
\[
\EV{\sup_{t \in [0,T]} \abs{M^{\bfX}(t)}} \leq \frac12 \sup_{t \in [0,T]} \mathrm{e}^{-G_t} \norm{\bfX^n(t)}_n^2 + K dL^2 \int_0^T \mathrm{e}^{-G_t} \big(\norm{U^n(t)}_n^2 + \norm{\bfX^n(t)}_n^2\big) \dt,
\]
which concludes the proof.
\end{proof}

\subsection{Proof of Theorem \ref{thm:Approx}}
In this part we estimate all parts contributing to the error $E^n(t) \df ( U(t) - \tU^n(t), \bfX(t) - \tilde{\bfX}^n(t))$. Recall the equation for the linear interpolation $\tU^n = \iota_n U^n$ and $\tbfX^n = \iota_n \bfX^n$.
\begin{align*}
\mathrm{d} \tU^n(t) &= \Big( \iota_n A^n U^n(t) + \iota_n f^n \big(U^n(t), \bfX^n(t)\big) \Big) \dt + \iota_n B^n \, \mathrm{d} W^n(t), & \tU^n(0) &= \iota_n P_n u_0,\\
\mathrm{d} \tilde{X}_i^n(t) &=  \iota_n f_i\big(U^n(t),\bfX^n(t)\big) \dt + \iota_n B^n_i\big(U^n(t), \bfX^n(t)\big) \, \mathrm{d} W_i^n(t), & \tilde{X}_i^n(0) &= \iota_n P_n X_i(0).
\end{align*}
In the following we consider everything at some fixed time $t$ and for a clear presentation we drop the explicit time dependence in the notation.

\noindent\textbf{Approximation Error of the Laplacian:} We have that
\begin{align*}
&2\dualp{A U - \iota_n A^n U^n}{U - \tU^n}\\
&\quad= 2\dualp{A U - A \tU^n}{ U - \tU^n} + 2\dualp{A \tU^n - \iota_n A^n U^n}{U - \tU^n}\\
&\quad\leq - 2\norm{U - \tU^n}_V^2 + 2\norm{A \tU^n - \iota_n A^n U^n}_{V^\ast} \norm{U - \tU^n}_V\\
&\quad \leq - \norm{U - \tU^n}_V^2 + \norm{A \tU^n - \iota_n A^n U^n}_{V^\ast}^2
\end{align*}
Furthermore, it holds that
\begin{equation}
A \tU^n = \frac{1}{2n} \Big( \big(A^n U^n\big)_0 \delta_0(x) + \big(A^n U^n\big)_n \delta_1(x)\Big) + \frac{1}{n} \sum_{k=1}^{n-1} \big(A^n U^n)_k \delta_{\frac{k}{n}}(x)
\end{equation}
and
\begin{equation}
\iota_n A^n U^n = \sum_{k=1}^n \big(A^n U^n\big)_k (nx - k+1) \mathbbm{1}_{[ \frac{k-1}{n}, \frac{k}{n}]}(x) + \big(A^n U^n)_{k-1} (k-nx) \mathbbm{1}_{[ \frac{k-1}{n}, \frac{k}{n}]}(x).
\end{equation}
Now let $\phi \in C^\infty(\Dom)$, then
\begin{align*}
&\dualp{A \tU^n - \iota_n A^n U^n}{\phi}\\
&= \sum_{k=1}^{n-1} \big(A^n U^n \big)_k \Big( \frac{1}{n} \phi \big(\tfrac{k}{n}\big) - \int_{\frac{k-1}{n}}^{\frac{k}{n}} (nx-k+1) \phi(x) \dx - \int_{\frac{k}{n}}^{\frac{k+1}{n}} (k + 1 - nx) \phi(x) \dx \Big)\\
&\quad+ \big(A^n U^n\big)_0 \Big( \frac{1}{2n} \phi(0) - \int_0^{\frac{1}{n}} (1 - nx) \phi(x) \dx \Big)\\
&\quad + \big(A^n U^n\big)_n \Big( \frac{1}{2n} \phi(1) - \int_{1-\frac{1}{n}}^1 (nx - n+1) \phi(x) \dx \Big)\\
&= \sum_{k=1}^{n-1} \big(A^n U^n \big)_k \Big( \int_{\frac{k-1}{n}}^{\frac{k}{n}} {\kern -0.5em}(nx-k+1) \big( \phi \big(\tfrac{k}{n}\big) - \phi(x)\big) \dx + \int_{\frac{k}{n}}^{\frac{k+1}{n}} {\kern -0.5em} (k + 1 - nx) \big( \phi \big(\tfrac{k}{n}\big) - \phi(x)\big) \dx \Big)\\
&\quad+ \big(A^n U^n\big)_0 \int_0^{\frac{1}{n}} (1 - nx)\big( \phi(0) - \phi(x)\big) \dx + \big(A^n U^n\big)_n \int_{1-\frac{1}{n}}^1 (nx - n+1) \big(\phi(1) - \phi(x) \big) \dx.
\end{align*}
Obviously, we can write $\phi(\frac{k}{n}) - \phi(x) = \int_x^{\frac{k}{n}} \phi' (y) \dy$ and together with Cauchy-Schwarz' and Jensen's inequality it follows that
\begin{align*}
\dualp{A \tU^n - \iota_n A^n U^n}{\phi} \leq \frac{2}{n} \abs{A^n U^n}_n \Bigg( \sum_{k=1}^n \int_{\frac{k-1}{n}}^{\frac{k}{n}} \phi'(y)^2 \dy \Bigg)^\frac12 = \frac{2}{n} \abs{A^n U^n}_n \norm{\phi}_V.
\end{align*}
This inequality extends to all $\phi \in V$ and in particular we obtain that
\begin{equation}
2\dualp{AU - \iota_n A^n U^n}{U - \tU^n} \leq - \norm{U - \tU^n}_V^2 + \frac{4}{n^2} \abs{A^n U^n}_n^2.
\end{equation}
\textbf{Approximation Error of the Nonlinear Drift:} Let us start with the error coming from the first variable. There we obtain
\begin{align*}
&2 \scp{f\big(U, \bfX\big) - \iota_n f \big(U^n, \bfX^n\big)}{ U - \tU^n}_H\\
&\quad\leq 2\norm{ f\big(U, \bfX\big) - f \big( \tU^n, \tbfX^n\big)}_H \norm{ U - \tU^n}_H + 2\scp{f \big( \tU^n, \tbfX^n\big) - \iota_n f \big(U^n, \bfX^n\big)}{U - \tU^n}_H\\
&\quad\leq \Big(L^2 \big( 1 + R_t^{r-1} \big)^2 ( 1 + \rho_0)^2 + 1 \Big) \norm{U - \tU^n}_H^2 + \norm{\bfX - \tbfX^n}_{H_d}^2 \\
&\qquad+ 2\scp{f \big( \tU^n, \tbfX^n\big) - \iota_n f \big(U^n, \bfX^n\big)}{U - \tU^n}_H
\end{align*}
by \eqref{eq:LocalLipschitz}. The latter term can be estimated as follows
\begin{align*}
&2\scp{f \big( \tU^n, \tbfX^n\big) - \iota_n f \big(U^n, \bfX^n\big)}{U - \tU^n}_H\\
&\quad= \sumk \intwithlimits{k} \Big[ (nx - k+1) \Big( f\big(\tU^n(x), \tbfX^n(x)\big) - f\big(U^n_k, \bfX^n_k\big)\Big)\Big.\\
&\qquad\qquad\qquad \Big.+ (k - nx)\Big( f\big(\tU^n(x), \tbfX^n(x)\big) - f\big( U^n_{k-1}, \bfX^n_{k-1}\big) \Big)\Big] \big( U(x) - \tU^n(x)\big) \dx.\\
\intertext{With \eqref{eq:LocalLipschitz}, the relations $\abs{\tU^n(x) - U^n_k} = (k-nx) \abs{U^n_k - U^n_{k-1}}$, as well as $\abs{\tU^n(x) - U^n_{k-1}} = (nx-k+1) \abs{U^n_k - U^n_{k-1}}$ and of course the same ones for $\bfX^n$ it follows that}
&\quad\leq 4 L \big( 1 + R_t^{r-1} \big) ( 1 + \rho_0) \sumk \big( \abs{U^n_k - U^n_{k-1}}^2 + \abs{\bfX^n_k - \bfX^n_{k-1}}^2\big)^\frac12\\
&\qquad \times \intwithlimits{k} (nx-k+1) (k-nx) \abs{U(x) - \tU^n(x)} \dx\\
&\quad \leq L^2 \big( 1 + R_t^{r-1} \big)^2 ( 1 + \rho_0)^2 \norm{U-\tU^n}_H^2 + \frac{K}{n^2} \Big( \norm{U^n}_n^2 + \norm{\bfX^n}_n^2 \Big).
\end{align*}
Thus, in the end we have obtained the following estimate.
\begin{equation}\label{eq:ErrorF}
\begin{split}
&2 \scp{f\big(U, \bfX\big) - \iota_n f \big(U^n, \bfX^n\big)}{ U - \tU^n}_H \leq  \frac{K}{n^2} \Big( \norm{U^n}_n^2 + \norm{\bfX^n}_n^2 \Big)\\
&\qquad +\Big(2L^2 \big( 1 + R_t^{r-1} \big)^2 ( 1 + \rho_0)^2 + 1 \Big) \norm{U-\tU^n}_H^2 + \norm{\bfX - \tbfX^n}_{H_d}^2.
\end{split}
\end{equation}
The estimates for each $f_i$ work the same way, in particular it holds that
\begin{equation}\label{eq:ErrorFi}
\begin{split}
&2 \scp{\mathbf{f}\big(U, \bfX\big) - \iota_n \mathbf{f} \big(U^n, \bfX^n\big)}{ \bfX - \tbfX^n}_{H_d} \leq  \frac{K}{n^2} \Big( \norm{U^n}_n^2 + \norm{\bfX^n}_n^2 \Big)\\
&\qquad + \Big(4L^2 \sum_{i=1}^d \big( 1 + \rho_i(R_t) \big)^2 + 1 \Big) \norm{\bfX-\tbfX^n}_{H_d}^2 + \norm{U - \tU^n}_H^2.
\end{split}
\end{equation}
\textbf{Approximation Error of the Covariance Operators:} 
\begin{align*}
&\norm{B - \iota_n B^n P_n}_{L_2(H)}^2\\
&\quad= {\kern -0.5em} \sum_{k=1, l=0}^n {\kern -0.25em} \intwithlimits{k} \intwithlimits{l} \Big[ (nx-k+1) \big( b(x,y) - b^n_{k,l}\big) + (k-nx) \big( b(x,y) - b^n_{k-1,l} \big) \Big]^2 \dy \dx
\end{align*}
Observe that for $x \in I_k$, $y \in I_l$
\begin{align*}
b(x,y) - b^n_{k,l} &= \big( \abs{I_k} \abs{I_l}\big)^{-1} \int_{I_k} \int_{I_l} b(x,y) - b(x', y) + b(x', y) - b(x', y') \dy' \dx'\\
&= \big( \abs{I_k} \abs{I_l}\big)^{-1} \int_{I_k} \int_{I_l} \int_{x'}^x \partial_z b(z,y) \dz + \int_{y'}^y \partial_z b(x', z) \dz \dy' \dx'.
\end{align*}
With this relation it is straightforward to obtain
\begin{equation}\label{eq:ErrorB}
\begin{split}
\norm{B - \iota_n B^n P_n}_{L_2(H)}^2 &\leq \frac{2}{n^2} \int_0^1 \int_0^1 \Big( \abs{\partial_x b(x,y)}^2 + \abs{\partial_y b(x,y)}^2 \Big) \dy \dx\\
&\leq \frac{2}{n^2} \norm{b}_{W^{1,2}(\Dom^2)}^2 = \frac{2}{n^2} \norm{B}_{L_2(H,V)}^2.
\end{split}
\end{equation}
Of course, the It\^o correction coming from the $\bfX$-variable can be estimated similarly, with a combination of the calculation above and the one for the nonlinear drift.
\begin{align*}
&\norm{B_i(U, \bfX) - \iota_n B_i^n (U^n, \bfX^n) P_n}_{L_2(H)}^2\\
&\quad= 2 \norm{B_i(U, \bfX) - B_i(\tU^n, \tbfX^n)}_{L_2(H)}^2 + 2 \norm{B_i(\tU^n, \tbfX^n) - \iota_n B_i^n (U^n, \bfX^n) P_n}_{L_2(H)}^2\\
&\quad\leq 4 L^2 \Big( \norm{U- \tU^n}_H^2 + \norm{\bfX - \tbfX^n}_{H_d}^2\Big) + 2 \norm{B_i(\tU^n, \tbfX^n) - \iota_n B_i^n (U^n, \bfX^n) P_n}_{L_2(H)}^2
\end{align*}
and the latter term is
\[
\norm{B_i(\tU^n, \tbfX^n) - \iota_n B_i^n (U^n, \bfX^n) P_n}_{L_2(H)}^2 \leq \frac{K}{n^2} \norm{B_i\big( \tU^n, \tilde{\bfX}^n\big)}_{L_2(H,V)}^2 \leq \frac{K L^2}{n^2} \big( \norm{U^n}_n^2 + \norm{\bfX^n}_n^2\big).
\]
\begin{proof}[Proof of Theorem \ref{thm:Approx}]
We can apply It\^o's formula to the square of the $H_{d+1}$-norm of $E^n(t)$ and obtain
\begin{align*}
\mathrm{d} \norm{E^n(t)}_{H_{d+1}}^2 &= 2 \dualp{A U(t) - \iota_n A^n U^n(t)}{U(t) - \tU^n(t)} \dt\\
&\quad + 2 \scp{f\big( U(t), \bfX(t)\big) - \iota_n f \big( U^n(t), \bfX^n(t)\big)}{U(t) - \tU^n(t)}_H \dt\\
&\quad + 2 \scp{ \mathbf{f} \big(U(t), \bfX(t)\big) - \iota_n \mathbf{f} \big( U^n(t), \bfX^n(t)\big)}{ \bfX(t) - \tbfX^n(t)}_{H_d} \dt\\
&\quad + 2 \scp{U(t) - \tU^n(t)}{\Big( B - \iota_n B^n P_n\Big) \dwt}_H + \norm{ B - \iota_n B^n P_n}_{L_2(H)}^2 \dt\\
&\quad + 2 \scp{\bfX(t) - \tbfX^n(t)}{ \Big( \mathbf{B} \big( U(t), \bfX(t)\big) - \iota_n \mathbf{B}^n \big(U^n(t), \bfX^n(t)\big) P_n \Big) \, \mathrm{d} \mathbf{W}(t)}_{H_d}\\
&\quad + \sum_{i=1}^d \norm{ B_i\big( U(t), \bfX(t)\big) - \iota_n B_i^n \big( U^n(t), \bfX^n(t)\big) P_n}_{L_2(H)}^2 \dt.
\end{align*}
Everything above has been estimated in the three steps before, except for the stochastic integrals. Recall $G_t$ and we need to estimate the corresponding equation with $\exp[-G_t]$ in the supremum over $t \in [0,T]$. Obviously, this involves the Burkholder-Davis-Gundy inequality after integration with respect to $\PP$. The standard decomposition, see e.\,g. Lemma \ref{lem:ImprovedAPriori}, bounds the quadratic variation of the stochastic integral from above in terms of the left hand side and the already investigated It\^o correction term yields the following inequality in a straightforward way
\begin{align*}
\EV{ \sup_{t \in [0,T]} \mathrm{e}^{-G_t} \norm{ E^n(t)}_{H_{d+1}}^2} &\leq 2 \EV{ \norm{ E^n(0)}_{H_{d+1}}^2} + \frac{8}{n^2} \EV{ \int_0^T \mathrm{e}^{-G_t} \abs{A^n U^n(t)}_n^2 \dt}\\
&\quad + \frac{K}{n^2} \big(dL^2 + 1 \big) \EV{ \int_0^T \mathrm{e}^{-G_t} \big( \norm{U^n(t)}_n^2 + \norm{\bfX^n(t)}_n^2 \big) \dt}.
\end{align*}
By Lemma \ref{lem:ImprovedAPriori} we know that the expectations on the right hand side are uniformly bounded in $n$, hence there exists $C_{\ref{thm:Approx}}$ with 
\[
\EV{ \sup_{t \in [0,T]} \mathrm{e}^{-G_t} \norm{ E^n(t)}_{H_{d+1}}^2} \leq 2\EV{ \norm{ E^n(0)}_{H_{d+1}}^2} + \frac{C_{\ref{thm:Approx}}}{n^2}.\qedhere
\]
\end{proof}

\subsection{Proof of Theorem \ref{thm:Monotone}}
Apparently the problematic term $\exp[-G_t]$ is due to the nonlinear drift. Going back to the derivation of the error estimates \eqref{eq:ErrorF} and \eqref{eq:ErrorFi} we observe that with a slightly different application of Young's inequality and the one-sided Lipschitz condition follows
\begin{equation}
\begin{split}
&2\langle \vek{f(U, \bfX) - \iota_n f(U^n, \bfX^n)}{\mathbf{f}(U, \bfX) - \iota_n \mathbf{f}(U^n, \bfX^n)}, \vek{U-\tU^n}{\bfX - \tilde{\bfX}^n}\rangle_{H_{d+1}}\\
&\quad\leq 2L \big( \norm{U - \tU^n}_H^2 + \norm{\bfX - \tilde{\bfX}^n}_{H_d}^2 \big) + \frac{K}{n^2} G_t \big( \norm{U^n}_n^2 + \norm{\bfX^n}_n^2 \big).
\end{split} 
\end{equation}
Gronwall's inequality now implies
\begin{align*}
\EV{ \sup_{t \in [0,T]} \norm{ E^n(t)}_{H_{d+1}}^2} &\leq \mathrm{e}^{4L T}\EV{ \norm{ E^n(0)}_{H_{d+1}}^2} + \frac{4}{n^2} \mathrm{e}^{4L T} \EV{ \int_0^T \abs{A^n U^n(t)}_n^2 \dt}\\
&\quad + \frac{K}{n^2} \big(dL^2 + 1 \big)\mathrm{e}^{4L T} \EV{ \int_0^T G_t \big( \norm{U^n(t)}_n^2 + \norm{\bfX^n(t)}_n^2 \big) \dt}.
\end{align*}
It remains to show that the right hand side is finite, basically a modification of Lemma \ref{lem:ImprovedAPriori}, which yields a priori estimates in $L^p$ for $p>2$. Then, with H\"older's inequality and the Gaussian moments of $G_t$ we can immediately conclude Theorem \ref{thm:Monotone}. 
\begin{cor}\label{cor:ImprovedAPriori}
For all $n \in \N$ and $1\leq p < \infty$ it holds that
\begin{gather*}
\EV{\int_0^T \abs{A^n U^n(t)}_n^2 \dt} \leq \mathrm{e}^{(L + dL^2) T} \EV{ \norm{U^n(0)}_n^2 + \norm{\bfX^n(0)}_n^2} + T \norm{B}_{L_2(H,V)}^2,\\
\EV{ \sup_{t \in [0,T]} \big( \norm{U^n(t)}_n^2 + \norm{\bfX^n(t)}_n^2 \Big)^p} \leq C \EV{ \Big( \norm{U^n(0)}_n^{2p} + \norm{\bfX^n(0)}_n^{2p} + T \norm{B}_{L_2(H,V)}^{2p}\Big)}.
\end{gather*}
\end{cor}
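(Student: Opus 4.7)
The plan is to revisit the proof of Lemma \ref{lem:ImprovedAPriori}, substituting Assumption \ref{assum:Monotone} for the local Lipschitz estimates so that the exponential weight $\mathrm{e}^{-G_t}$ becomes unnecessary. First I would apply It\^o's formula to the combined quantity $\norm{U^n(t)}_n^2 + \norm{\bfX^n(t)}_n^2$. Summation by parts \eqref{eq:SummationByParts} on the linear part produces the good term $-2\abs{A^n U^n(t)}_n^2 \dt$, and the additive noise contributes exactly $\norm{B}_{L_2(H,V)}^2 \dt$ as in the original lemma. The drift contribution now takes the form
\[
2n \sumk \Big[\big(U^n_k - U^n_{k-1}\big)\big(f_k - f_{k-1}\big) + \big(\bfX^n_k - \bfX^n_{k-1}\big)\cdot\big(\mathbf{f}_k - \mathbf{f}_{k-1}\big)\Big]\dt,
\]
with $f_k = f(U^n_k,\bfX^n_k)$ and $\mathbf{f}_k = \mathbf{f}(U^n_k,\bfX^n_k)$. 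The decisive observation is that Assumption \ref{assum:Monotone} applied to each pair $(U^n_k - U^n_{k-1},\bfX^n_k - \bfX^n_{k-1})$ bounds this expression globally by $2L(\norm{U^n}_n^2 + \norm{\bfX^n}_n^2)\dt$, with no appearance of $R_t$, $\rho_i(R_t)$, or the process $G_t$.

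Next I would treat the It\^o correction coming from the multiplicative noise in $\bfX$ exactly as in Lemma \ref{lem:ImprovedAPriori}, using only the global Lipschitz property of $b_i$; this yields a bound by a constant multiple of $dL^2(\norm{U^n}_n^2 + \norm{\bfX^n}_n^2)\dt$. Combining all contributions, the differential inequality becomes
\[
\mathrm{d}\big(\norm{U^n(t)}_n^2 + \norm{\bfX^n(t)}_n^2\big) + 2\abs{A^n U^n(t)}_n^2 \dt \leq c(L+dL^2)\big(\norm{U^n(t)}_n^2 + \norm{\bfX^n(t)}_n^2\big)\dt + \norm{B}_{L_2(H,V)}^2 \dt + \mathrm{d}M(t),
\]
where $M$ is a local martingale. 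Taking expectation, dropping the non-negative $2\abs{A^n U^n}_n^2$ term on the left, and applying the scalar Gronwall lemma to $t \mapsto \EV{\norm{U^n(t)}_n^2 + \norm{\bfX^n(t)}_n^2}$ yields the $\mathrm{e}^{(L+dL^2)T}$ bound. Re-inserting this uniform bound into the integrated form of the same inequality gives the first estimate of the corollary.

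For the $p$-th moment estimate I would apply It\^o's formula to $(\norm{U^n(t)}_n^2 + \norm{\bfX^n(t)}_n^2)^p$. The deterministic part inherits the same structure with an extra factor of $p$, while the cross term $(\cdots)^{p-1}\norm{B}_{L_2(H,V)}^2$ is controlled by Young's inequality. Taking the supremum over $t\in[0,T]$ and then expectation, the remaining martingale is handled by the Burkholder-Davis-Gundy inequality; its quadratic variation has the form $\sup_t(\cdots)^p \cdot \int_0^T(\cdots)\dt$, which Young's inequality absorbs into the left-hand side with a small coefficient. A final Gronwall step closes the argument. The only mildly delicate point is the bookkeeping of $p$-dependent constants in the BDG absorption, which is routine; the conceptual content is that Assumption \ref{assum:Monotone} provides a \emph{global} one-sided control of the drift difference, thereby eliminating the weight $\mathrm{e}^{-G_t}$ altogether.
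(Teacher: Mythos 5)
Your argument is correct and is essentially the paper's own proof: the paper likewise reruns Lemma \ref{lem:ImprovedAPriori} with Assumption \ref{assum:Monotone} applied to the grid differences $(U^n_k-U^n_{k-1},\bfX^n_k-\bfX^n_{k-1})$, so the drift term is bounded by $L\big(\abs{U^n_k-U^n_{k-1}}^2+\abs{\bfX^n_k-\bfX^n_{k-1}}^2\big)$ and $G_t$ never appears, after which Gronwall gives the first estimate and Burkholder--Davis--Gundy plus Gronwall give the second. The only cosmetic deviation is in the $p$-th moment step, where the paper raises the integrated pathwise inequality to the power $p$ (thereby avoiding an It\^o formula for $x\mapsto x^p$, which is not $C^2$ at the origin when $1<p<2$) instead of applying It\^o's formula to the $p$-th power as you propose.
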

Clearly, this concludes the assertion using H\"older's inequality, since $G_t$ has finite moments of any order. The proof of this a priori estimate is essentially the same as the one of Lemma \ref{lem:ImprovedAPriori} with the observation, that we only need
\[
\vek{f(U^n_k, \bfX^n_k) - f(U^n_{k-1}, \bfX^n_{k-1})}{ \mathbf{f}(U^n_k, \bfX^n_k) - \mathbf{f}(U^n_{k-1}, \bfX^n_{k-1})}\vek{U^n_k - U^n_{k-1}}{\bfX^n_k - \bfX^n_{k-1}} \leq L \big( \abs{U^n_k - U^n_{k-1}}^2 + \abs{\bfX^n_k - \bfX^n_{k-1}}^2 \big)
\]
and $G_t$ does not appear at all, in particular for all $t \in [0,T]$ it holds that
\begin{align*}
\norm{U^n(t)}_n^2 + \norm{\bfX^n(t)}_n^2 + 2 \int_0^t \abs{A^n U^n(s)}_n^2 \ds &\leq \big( 2L + 2dL^2\big) \int_0^t \big( \norm{U^n(s)}_n^2 + \norm{\bfX^n(s)}_n^2 \big) \ds\\
&\quad + \norm{B}_{L_2(H,V)}^2 t + M^U(t) + M^{\bfX}(t),
\end{align*}
where again the stochastic integrals are denoted by $M^u$ and $M^{\bfX}$, respectively. Integration with respect to $\PP$ and Gronwall's inequality directly yields the first a priori estimate. For the second one consider the inequality above to the power $1< p < \infty$.
\begin{align*}
K \sup_{t \in [0,T]} \big(\norm{U^n(t)}_n^2 + \norm{\bfX^n(t)}_n^2\big)^p &\leq \big( 2L + 2dL^2\big)^p T^{p-1} \int_0^T \big( \norm{U^n(t)}_n^2 + \norm{\bfX^n(t)}_n^2 \big)^p \dt\\
&\quad + \norm{B}_{L_2(H,V)}^{2p} T^p + \sup_{t \in [0,T]} \abs{M^U(t)}^p + \sup_{t \in [0,T]} \abs{M^{\bfX}(t)}^p.
\end{align*}
The Burkholder-Davis-Gundy inequality and Gronwall's lemma yield the result. \qed 

\section{Applications}
As an application for our main results we consider two equations describing the propagation of the action potential in a neuron along the axon. More precisely we study the Hodgkin-Huxley equations and the FitzHugh-Nagumo equations mostly popular in the mathematical literature. In particular, one can use Theorem \ref{thm:Monotone} to extend the results of \cite{SauerLattice}.
\subsection{Stochastic Hodgkin-Huxley Equations}\label{sec:HH}
As already described in the introduction, the Hodgkin-Huxley equations, see \cite{HodgkinHuxley}, are the basis for all subsequent conductance based models for active nerve cells. In the case of the squid's giant axon, the currents generating the action potential are primarily due to sodium and potassium ions and the membrane potential satisfies
\begin{equation}\label{eq:HHmembrane}
\tau \partial_t U = \lambda^2 \partial_{xx} U - g_{\text{Na}} ( U - E_{\text{Na}}) - g_{\text{K}} (U - E_{\text{K}}) - g_{\text{L}} (U - E_{\text{L}}).
\end{equation}
The latter term is the leak current with $g_{\text{L}} > 0$, $E_{\text{Na}}, E_{\text{K}} \in \R$ are the resting potentials of sodium and potassium and $g_{\text{Na}}, g_{\text{K}}$ are their conductances. $\tau$ and $\lambda$ are the specific time and space constants of the axon. Due to opening and closing of ion channels the conductances $g_{\text{Na}}, g_{\text{K}}$ may change with time, in particular
\[
g_{\text{Na}} = \overline{g}_{\text{Na}} m^3 h \quad \text{and} \quad g_{\text{K}} = \overline{g}_{\text{K}} n^4, \quad \overline{g}_{\text{Na}}, \overline{g}_{\text{K}} > 0,
\]
where $(n,m,h)$ are gating variables describing the probability of ion channels being open. Let $x = n,m,h$, then
\begin{equation}\label{eq:HHgating}
\frac{\mathrm{d} x}{\dt} = \alpha_x(U) ( 1 - x) - \beta_x(U) x
\end{equation}
with typical shapes
\[
\alpha_x(U) = a_x^1 \frac{U + A_x}{1 - \mathrm{e}^{-a_x^2 (U + A_x)}} \geq 0 \quad \text{and} \quad \beta_x(U) = b_x^1 \mathrm{e}^{-b_x^2 (U + B_x)}\geq 0
\]
for some constants $a_x^i, b_x^i > 0$, $A_x, B_x \in \R$. For the data matching the ``standard Hodgkin-Huxley neuron'' we refer to e.\,g. \cite[Section 1.9]{Ermentrout}.

Let $\bfx = (n,m,h)$ and we implicitly defined the nonlinearities $f, f_i$ above. It is now immediate to see that Assumption \ref{assum:Drift} is satisfied with $r=2$,
\[
\rho(\bfx) \df \max \{ \abs{n}^4, \abs{m}^3 \abs{h}, \abs{m}^3, \abs{m}^2 \abs{h}, \abs{n}^3\},
\]
and
\[
\rho_i(u) \df  \max \{ \alpha_{x_i}(u) + \beta_{x_i}(u), \alpha_{x_i}'(u) + \beta_{x_i}'(u)\}.
\]
Note that the Lipschitz constants do indeed grow exponentially. Concerning Assumption \ref{assum:Invariance} we observe that for $\bfx \in [0,1]^3$ it holds that
\[
\partial_u f(u, \bfx) = - \overline{g}_{\text{Na}} m^3 h - \overline{g}_{\text{K}} n^4 -  g_{\text{L}} \leq - g_{\text{L}} < 0,
\]
hence $K = 0$. Also, the invariance of $[0,1]^3$ follows by
\[
f_i(u, x_i) \mathbbm{1}_{\{x_i \leq 0\}} = \Big(\alpha_{x_i} (u) - \big( \alpha_{x_i}(u) + \beta_{x_i}(u)\big) x_i \Big) \mathbbm{1}_{\{x_i \leq 0\}} \geq \alpha_{x_i} (u)\mathbbm{1}_{\{x_i \leq 0\}} \geq 0
\]
and
\[
f_i(u, x_i) \mathbbm{1}_{\{x_i \geq 1 \}} = \Big(\big( \alpha_{x_i}(u) + \beta_{x_i}(u)\big) (1 - x_i) - \beta_{x_i}(u) \Big) \mathbbm{1}_{\{x_i \geq 1\}} \leq - \beta_{x_i} (u)\mathbbm{1}_{\{x_i \geq 1\}} \leq 0.
\]
So far we only stated the deterministic system, however adding noise in both variables is physiologically reasonable, see \cite{GoldwynSheaBrown} and we can study the following system.
\begin{align*}
\tau \mathrm{d}U(t) &= \Big(\lambda^2 A U(t) - \overline{g}_{\text{Na}} m(t)^3 h(t) \big( U(t) - E_{\text{Na}}\big) \Big.\\
&\qquad \Big. - \overline{g}_{\text{K}} n(t)^4 \big(U(t) - E_{\text{K}}\big) - g_{\text{L}} \big(U(t) - E_{\text{L}}\big) \Big) \dt + B \dwt,\\
\mathrm{d}n(t) &= \Big( \alpha_n \big(U(t)\big) \big( 1 - n(t)\big) - \beta_n\big(U(t)\big) n(t)\Big) \dt\\
&\qquad + \mathbbm{1}_{\{0 \leq n(t) \leq 1\}} \sigma_n n(t) \big( 1 - n(t)\big) B_n \,\mathrm{d}W_n(t),\\
\mathrm{d}m(t) &= \Big( \alpha_m \big(U(t)\big) \big( 1 - m(t)\big) - \beta_m\big(U(t)\big) m(t)\Big) \dt\\
&\qquad + \mathbbm{1}_{\{0 \leq m(t) \leq 1\}} \sigma_m m(t) \big( 1 - m(t)\big) B_m \,\mathrm{d}W_m(t),\\
\mathrm{d}h(t) &= \Big( \alpha_h \big(U(t)\big) \big( 1 - h(t)\big) - \beta_h\big(U(t)\big) h(t)\Big) \dt\\
&\qquad + \mathbbm{1}_{\{0 \leq h(t) \leq 1\}} \sigma_h h(t) \big( 1 - h(t)\big) B_h \,\mathrm{d}W_h(t).
\end{align*}
Here, $W, W_n, W_m$ and $W_h$ are cylindrical Wiener processes on $H= L^2(\Dom)$ and $B, B_n, B_m$ and $B_h$ are Hilbert-Schmidt operators $\in L_2(H,V)$, hence both Assumption \ref{assum:Drift} and \ref{assum:Invariance} are satisfied and we can apply Theorem \ref{thm:Ex+Unique} and \ref{thm:Approx} for the stochastic version of the system \eqref{eq:HHmembrane} + \eqref{eq:HHgating}.
\subsection{Stochastic FitzHugh-Nagumo Equations}
As in \cite{SauerLattice}, we consider the spatially extended stochastic FitzHugh-Nagumo equations as a second example. These equations were originally stated by FitzHugh in \cite{FitzHugh1}, \cite{FitzHugh2} as a system of ODEs simplifying the Hodgkin-Huxley model in terms of only two variables $U$ and $\bfX = w$, the so-called recovery variable. See e.\,g. the monograph \cite{Ermentrout} for more details on the deterministic case. With the original parameters, the equations are
\begin{equation}\label{eq:FHN}
\begin{split}
\mathrm{d}U(t) &= \Big( A U(t) + \big( U(t) - \tfrac13 U(t)^3\big) - w(t) \Big) \dt + B \dwt,\\
\mathrm{d}w(t) &= 0.08 \big( U(t) - 0.8 w(t) + 0.7\big) \dt,
\end{split}
\end{equation}
where $W$ is a cylindrical Wiener process on $H = L^2(\Dom)$ and $B \in L_2(H,V)$. One can easily check that Assumption \ref{assum:Drift}, \ref{assum:Kernel} and \ref{assum:Monotone} are satisfied, however not the second part of Assumption \ref{assum:Invariance}, which is not surprisingly since $w$ is not representing a proportion anymore and thus $[0,1]$ is not forward invariant for the dynamics. On the other hand, we have
\[
\abs{\nabla f(u,w)} \leq 1 + \abs{u}^2 \quad \text{and}\quad \abs{\nabla f_w(u,w)} \leq L,
\]
hence $w$ does not appear in the Lipschitz constants and in particular
\[
G_t \df \int_0^t \Big( 4\big(1 + R_s^4\big) + K \big(L^2 + 1\big)\Big) \ds
\]
is independent of any $L^\infty$-bound for $w$ and therefore the invariance of $[0,1]$ for $w$ is obsolete. With these modifications we can use Theorem \ref{thm:Monotone} to obtain a strong rate of convergence of $\sfrac1n$, improving \cite[Theorem 3.1]{SauerLattice}.

\appendix
\section{Appendix}
The remaining part provides some straightforward estimates concerning the stochastic convolution and its approximation used throughout this article. Here, we do not aim for the most generality or even optimality of the results since this is a different matter.

Let $E \df C( \overline{\Dom}, \R)$ and $Y$ the unique (mild) solution to
\begin{equation}\label{app:OU}
\mathrm{d} Y(t) = A Y(t) \dt + B \dwt, \quad Y(0) = 0.
\end{equation}
\begin{lem}\label{app:Lem1}
Define by $\xi \df \norm{Y}_{C([0,T]; E)} < \infty$ $\PP$-a.\,s. a random variable on $\Omega$ that satisfies $\EV{ \exp [\alpha \xi^2]} < \infty$ for some $\alpha > 0$. In particular, $\xi$ has finite moments of any order.
\end{lem}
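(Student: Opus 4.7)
The plan is to split the proof into two steps: first, establish that $Y$ has continuous paths in $E$ almost surely; second, leverage the Gaussian structure of $Y$ to upgrade this to the exponential moment bound.

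For the first step I would apply the classical factorization method. Fix some exponent $\theta \in (0, 1/2)$ and introduce the auxiliary stochastic convolution
\[
Y_\theta(t) \df \int_0^t (t-s)^{-\theta} \mathrm{e}^{(t-s)A} B \,\mathrm{d}W(s), \quad t \in [0,T].
\]
Since $B \in L_2(H,V)$ by Assumption \ref{assum:Kernel} and the Neumann heat semigroup $\mathrm{e}^{sA}$ is contractive on $V$, It\^o's isometry yields
\[
\EV{\norm{Y_\theta(t)}_V^2} \leq \norm{B}_{L_2(H,V)}^2 \int_0^t (t-s)^{-2\theta}\,\mathrm{d}s < \infty
\]
uniformly in $t \in [0,T]$. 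In one spatial dimension Sobolev embedding gives $V \hookrightarrow E$ continuously, hence $Y_\theta(t)$ is an $E$-valued centered Gaussian random variable, and Gaussian equivalence of moments in the Hilbert space $V$ (combined with the embedding into $E$) upgrades the above estimate to $\sup_{t \in [0,T]} \EV{\norm{Y_\theta(t)}_E^p} < \infty$ for every $p \geq 2$. The stochastic Fubini theorem then supplies the factorization identity
\[
Y(t) = \frac{\sin(\pi\theta)}{\pi} \int_0^t (t-s)^{\theta-1} \mathrm{e}^{(t-s)A} Y_\theta(s)\,\mathrm{d}s,
\]
and for $p$ large enough that $(t-s)^{(\theta-1)p/(p-1)}$ is integrable on $[0,T]$, H\"older's inequality applied pointwise together with strong continuity of $\mathrm{e}^{sA}$ on $E$ produces $Y \in C([0,T]; E)$ almost surely with $\EV{\norm{Y}_{C([0,T];E)}^p} < \infty$. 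In particular $\xi < \infty$ $\PP$-a.\,s.

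For the second step, observe that $Y$ is constructed as a stochastic integral with \emph{deterministic} integrand $s \mapsto \mathrm{e}^{(t-s)A} B$ against the cylindrical Wiener process $W$, and so $Y$ is a centered Gaussian random variable on the separable Banach space $C([0,T]; E)$. Fernique's theorem then immediately yields the existence of some $\alpha > 0$ with $\EV{\exp(\alpha \xi^2)} < \infty$, from which finite moments of every polynomial order follow trivially.

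The main technical obstacle is the bookkeeping in the factorization step: one must choose $\theta \in (0, 1/2)$ so that the singular kernel $(t-s)^{-\theta}$ keeps the inner stochastic integral square integrable, while $(t-s)^{\theta-1}$ is sufficiently integrable for H\"older's inequality to transfer the $p$-th moment bounds on $Y_\theta$ into path continuity of $Y$ in $E$. Once $Y \in C([0,T];E)$ almost surely is in hand, Fernique's theorem furnishes the Gaussian exponential bound essentially for free, which is why the only genuine work lies in the regularity step.
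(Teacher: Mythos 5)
Your proof is correct, but it takes a genuinely different route from the paper. You use the classical Da Prato--Zabczyk factorization method to establish $Y \in C([0,T];E)$ with $p$-th moments, and then invoke Fernique's theorem directly for the centered Gaussian random variable $Y$ on the separable Banach space $C([0,T];E)$ to get $\EV{\exp[\alpha\xi^2]} < \infty$. The paper instead performs a deterministic integration by parts in the mild formulation, writing $Y(t) = \int_0^t A\mathrm{e}^{(t-s)A}B(W(s)-W(t))\,\mathrm{d}s + \mathrm{e}^{tA}BW(t)$, which eliminates the stochastic integral entirely and yields the pathwise bound $\xi \leq C_A\,\zeta$ with $\zeta \df \norm{BW}_{C^\eta([0,T];E)}$ a Gaussian random variable (via the Veraar--Hyt\"onen regularity result and the Sobolev embedding $V\hookrightarrow E$); the exponential integrability is then read off from the Ledoux--Talagrand bound for $\zeta$. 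Both arguments are sound, and yours is arguably the more standard textbook proof of the lemma as stated. What the paper's approach buys, however, is the explicit dominating Gaussian variable $\zeta$: this same $\zeta$ is reused verbatim in Lemma \ref{app:Lem2} to show $\xi^n \leq C_A\zeta$ uniformly in $n$ for the spatially discretized Ornstein--Uhlenbeck process, which is what makes the cut-off in Lemma \ref{lem:BoundApprox} uniform in the discretization parameter. Your factorization argument, applied to $Y^n$, would require redoing the moment estimates at the discrete level and would not as cleanly produce an $n$-independent pathwise majorant. Within your own argument the only point worth making explicit is that the law of $Y$ on $C([0,T];E)$ is indeed a centered Gaussian measure (every continuous linear functional of $Y$ is a limit of linear combinations of the Gaussian point evaluations $Y(t,x)$), which is the hypothesis Fernique's theorem needs; this is standard and does not constitute a gap.
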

\begin{proof}
Since $A$ generates an exponentially stable, analytic semigroup $\{\mathrm{e}^{tA}\}_{t \geq 0}$ on $E$ we can do the following integration by parts to obtain a different representation of $Y$ as the mild solution to \eqref{app:OU}.
\[
Y(t) = \int_0^t \mathrm{e}^{(t-s)A} B \dws = \int_0^t A \mathrm{e}^{(t-s)A} B \big( W(s) - W(t)\big) \ds + \mathrm{e}^{tA} B W(t).
\]
In particular, this does not involve a stochastic integral anymore, thus standard estimates for $A$ yield
\begin{align*}
\norm{Y(t)}_E &\leq \int_0^t \norm{A \mathrm{e}^{(t-s)A} B \big( W(s) - W(t) \big)}_E \ds + \norm{ \mathrm{e}^{tA} BW(t)}_E\\
&\leq C_A \int_0^t (t-s)^{-1} \norm{BW(s) - BW(t)}_E \ds + C_A \norm{BW(t)}_E.
\end{align*}
Given $0 < \eta < \sfrac{1}{2}$ the process $\{BW\}_{t \geq 0}$ is $\eta$-H\"older continuous in $E$ by the Sobolev embedding $V \hookrightarrow E$ in $d=1$. Hence, define $\zeta \df \norm{BW}_{C^\eta([0,T];E)}$, which is a Gaussian random variable, see \cite[Theorem 5.1]{VeraarHytoenen}. The integrability then follows from \cite[Corollary 3.2]{LedouxTalagrand}.
\end{proof}
The second lemma concerns a simple uniform estimate for the solution to the approximating problem
\begin{equation}\label{app:OUApprox}
\mathrm{d} Y^n(t) = A^n Y^n(t) \dt + B^n \,\mathrm{d}W^n(t),\quad Y^n(0) = 0.
\end{equation}
\begin{lem}\label{app:Lem2}
For all $n \in \N$ it holds that
\[
\xi^n \df \norm{Y^n}_{C([0,T]; (R^{n+1}, \norm{\cdot}_{\max}))} = \sup_{t \in [0,T]} \max_{0 \leq k \leq n} \abs{ Y^n_k(t)} \leq C_A \zeta
\]
with the Gaussian random variable $\zeta$ from Lemma \ref{app:Lem1}. In particular, this bound is uniform in $n$ and has the same moments as $\xi$.
\end{lem}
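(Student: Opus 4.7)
The plan is to mirror the proof of Lemma \ref{app:Lem1} in the discrete setting, relying on uniform-in-$n$ semigroup estimates for $A^n$ and on a pathwise comparison between $B^n W^n$ and $BW$. First I would use integration by parts in the mild formulation for $Y^n$ (valid since $A^n$ generates an analytic semigroup on finite-dimensional $\R^{n+1}$) to write
\[
Y^n(t) = \int_0^t A^n \mathrm{e}^{(t-s)A^n} B^n\bigl(W^n(s) - W^n(t)\bigr) \ds + \mathrm{e}^{tA^n} B^n W^n(t),
\]
which removes the It\^o integral and reduces the task to a pathwise estimate.

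Next I would invoke two classical uniform-in-$n$ facts for the discrete Neumann Laplacian on $(\R^{n+1}, \norm{\cdot}_{\max})$: the contractivity $\norm{\mathrm{e}^{tA^n}}_{\max \to \max} \leq 1$ coming from the discrete maximum principle, and the uniform analyticity bound $\norm{A^n \mathrm{e}^{tA^n}}_{\max \to \max} \leq C_A / t$, which follows from Gaussian bounds on the discrete heat kernel. These two estimates produce the constant $C_A$ appearing in the statement and are the main technical ingredients to quote.

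The bridge to the continuous object $BW$ comes from an identity obtained by unwinding the definitions of $b^n_{k,l}$, $\beta^n_l$, and $W^n$: one checks that $(B^n W^n(t))_k$ equals the cell average $\abs{I_k}^{-1} \int_{I_k} (BW(t))(y) \dy$, so on the grid $B^n W^n(t)$ coincides with the piecewise-constant $L^2$-projection of $BW(t)$. Since cell averaging is an $L^\infty$-contraction, the $\eta$-H\"older regularity of $BW$ in $E$ established in Lemma \ref{app:Lem1} transfers to $B^n W^n$ in the max-norm uniformly in $n$:
\[
\norm{B^n W^n(t) - B^n W^n(s)}_{\max} \leq \norm{BW(t) - BW(s)}_E \leq \zeta \abs{t - s}^\eta.
\]

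Inserting these bounds into the integration-by-parts formula yields
\[
\norm{Y^n(t)}_{\max} \leq C_A \zeta \int_0^t (t-s)^{\eta - 1} \ds + \norm{B^n W^n(t)}_{\max} \leq C_A \zeta \bigl( T^\eta/\eta + T^\eta \bigr),
\]
uniformly for $t \in [0,T]$ and $n \in \N$, where I also used $\norm{BW(t)}_E \leq \zeta t^\eta$ since $BW(0) = 0$. Absorbing the $T$ and $\eta$ dependence into $C_A$ gives the claimed bound, and since $\zeta$ is Gaussian by Lemma \ref{app:Lem1}, $\xi^n$ inherits the same Gaussian moment bound. The main obstacle I anticipate is the uniform analytic-semigroup estimate in max-norm: spectral/$\ell^2$ bounds are insufficient, and one must appeal to pointwise discrete heat-kernel estimates or to a resolvent analysis of $A^n$ on the $C([0,1])$-type finite-dimensional space, both classical but requiring care to keep constants independent of $n$.
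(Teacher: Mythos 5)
Your proposal follows essentially the same route as the paper: integration by parts in the mild formulation of $Y^n$, uniform-in-$n$ max-norm bounds for the discrete semigroup (which the paper also only invokes implicitly, via ``as in the proof of Lemma \ref{app:Lem1}''), and transfer of the $\eta$-H\"older regularity from $BW$ to $B^n W^n$ through the contractivity of cell averaging, yielding $\xi^n \leq C_A \zeta$. The one small imprecision is your identity for $(B^n W^n(t))_k$: since $b^n_{k,l}$ averages the kernel in \emph{both} variables, $(B^n W^n(t))_k$ is the cell average over $I_k$ of $B\pi^n W(t)$ rather than of $BW(t)$ (with $\pi^n$ the projection onto the normalized indicators $\abs{I_l}^{-\sfrac12}\mathbbm{1}_{I_l}$), but this does not affect the argument because $\pi^n$ is itself an $L^2$-contraction --- which is exactly how the paper closes this step, via $\norm{\pi^n}_{L(H)} \leq 1$.
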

\begin{proof}
As in the proof of Lemma \ref{app:Lem1} we can conclude that $\xi^n \leq C_A \zeta^n$, where $\zeta^n \df \norm{B^n W^n}_{C^\eta([0,T]; (\R^{n+1}, \norm{\cdot}_{\max}))}$ for some $0 < \eta < \sfrac12$. We write
\[
\big( B^n W^n(t)\big)_k = \sum_{l=0}^n b^n_{k,l} \scp{W(t)}{ \mathbbm{1}_{I_l}}_H = n \int_{I_k} \scp{\pi^n b(x, \cdot)}{ W(t)}_H \dx,
\]
where $\pi^n u \df \sum_{l=0}^n \scp{u}{\abs{I_l}^{-\sfrac12} \mathbbm{1}_{I_l}}_H \abs{I_l}^{-\sfrac12} \mathbbm{1}_{I_l}$ is the projection onto these finitely many orthonormal indicator functions. It follows for $t \neq s$
\begin{align*}
&\abs{\big(B^n W^n(t) \big)_k - \big(B^n W^n(s)\big)_k} \abs{t-s}^{-\eta} \leq \abs{I_k}^{-1} \int_{I_k} \abs{ \scp{\pi^n b(x,\cdot}{W(t)-W(s)}_H} \dx \abs{t-s}^{-\eta}\\
&\quad \leq \norm{\pi^n}_{L(H)} \norm{BW(t) - BW(s)}_E \abs{t-s}^{-\eta} \leq \norm{BW}_{C^\eta([0,T]; E)} = \zeta.
\end{align*}
Since the right hand side is independent of $k$ as well as $t$ and $s$ the assertion follows.
\end{proof}

\section*{Acknowledgment}
This work is supported by the BMBF, FKZ 01GQ1001B.


\begin{thebibliography}{10}
\setlength{\itemsep}{-4pt}
\footnotesize

\bibitem{AnderssonLarsson}
A.~Andersson and S.~Larsson.
\newblock {Weak Convergence for a Spatial Approximation of the Nonlinear
  Stochastic Heat Equation}.
\newblock {\em arXiv preprint arXiv:1212.5564}, 2012.

\bibitem{Faugeras}
J.~Baladron, D.~Fasoli, O.~Faugeras, and J.~Touboul.
\newblock {Mean-Field Description and Propagation of Chaos in Networks of
  Hodgkin-Huxley and FitzHugh-Nagumo Neurons}.
\newblock {\em J. Math. Neurosci.}, 2(1):1--50, 2012.

\bibitem{BloemkerBurgers}
D.~Bl{\"o}mker, M.~Kamrani, and S.~M. Hosseini.
\newblock {Full Discretization of the Stochastic Burgers Equation With
  Correlated Noise}.
\newblock {\em IMA J. Numer. Anal.}, 33(3):825--848, 2013.

\bibitem{dpzSPDE}
G.~Da~Prato and J.~Zabczyk.
\newblock {\em {Stochastic Equations in Infinite Dimensions}}, volume~44 of
  {\em Encyclopedia of Mathematics and its Applications}.
\newblock Cambridge University Press, Cambridge, 1992.

\bibitem{Ermentrout}
G.~B. Ermentrout and D.~H. Terman.
\newblock {\em {Mathematical Foundations of Neuroscience}}, volume~35.
\newblock Springer, 2010.

\bibitem{FitzHugh1}
R.~FitzHugh.
\newblock {Impulses and Physiological States in Theoretical Models of Nerve
  Membrane}.
\newblock {\em Biophys. J.}, 1:445--466, 1961.

\bibitem{FitzHugh2}
R.~FitzHugh.
\newblock {Mathematical Models of Excitation and Propagation in Nerve}.
\newblock In {\em {Biological Engineering}}. McGrawHill, New York, 1969.

\bibitem{GessGradient}
B.~Gess.
\newblock {Strong Solutions for Stochastic Partial Differential Equations of
  Gradient Type}.
\newblock {\em J. Funct. Anal.}, 263(8):2355--2383, 2012.

\bibitem{GoldwynSheaBrown}
J.~H. Goldwyn and E.~Shea-Brown.
\newblock {The What and Where of Adding Channel Noise to the Hodgkin-Huxley
  Equations}.
\newblock {\em PLoS Comput. Biol.}, 7(11), 2011.

\bibitem{Gyongy99}
I.~Gy{\"o}ngy.
\newblock {Lattice Approximations for Stochastic Quasi-Linear Parabolic Partial
  Differential Equations Driven by Space-Time White Noise II}.
\newblock {\em Potential Anal.}, 11(1):1--37, 1999.

\bibitem{GyongyMillet05}
I.~Gy{\"o}ngy and A.~Millet.
\newblock {On Discretization Schemes for Stochastic Evolution Equations}.
\newblock {\em Potential Anal.}, 23(2):99--134, 2005.

\bibitem{GyongyMillet09}
I.~Gy{\"o}ngy and A.~Millet.
\newblock {Rate of Convergence of Space Time Approximations for Stochastic
  Evolution Equations}.
\newblock {\em Potential Anal.}, 30(1):29--64, 2009.

\bibitem{Hausenblas}
E.~Hausenblas.
\newblock {Numerical Analysis of Semilinear Stochastic Evolution Equations in
  Banach Spaces}.
\newblock {\em J. Comput. Appl. Math.}, 147(2):485--516, 2002.

\bibitem{HodgkinHuxley}
A.~L. Hodgkin and A.~F. Huxley.
\newblock {A Quantitative Description of Membrane Current and its Application
  to Conduction and Excitation in Nerve}.
\newblock {\em J. Physiol.}, 117:500--544, 1952.

\bibitem{Jentzen}
A.~Jentzen.
\newblock {Pathwise Numerical Approximation of SPDEs with Additive Noise under
  Non-Global Lipschitz Coefficients}.
\newblock {\em Potential Anal.}, 31(4):375--404, 2009.

\bibitem{KloedenJentzen}
A.~Jentzen and P.~E. Kloeden.
\newblock {The Numerical Approximation of Stochastic Partial Differential
  Equations}.
\newblock {\em Milan J. Math.}, 77(1):205--244, 2009.

\bibitem{KloedenNeuenkirch}
P.~E. Kloeden and A.~Neuenkirch.
\newblock {The Pathwise Convergence of Approximation Schemes for Stochastic
  Differential Equations}.
\newblock {\em LMS J. Comput. Math.}, 10:235--253, 2007.

\bibitem{Kruse}
R.~Kruse.
\newblock {Optimal Error Estimates of Galerkin Finite Element Methods for
  Stochastic Partial Differential Equations With Multiplicative Noise}.
\newblock {\em IMA J. Numer. Anal.}, 34(1):217--251, 2014.

\bibitem{LedouxTalagrand}
M.~Ledoux and M.~Talagrand.
\newblock {\em {Probability in Banach Spaces}}, volume~23.
\newblock Springer, 1991.

\bibitem{GinzburgLandau}
D.~Liu.
\newblock {Convergence of the Spectral Method for Stochastic Ginzburg-Landau
  Equation Driven by Space-Time White Noise}.
\newblock {\em Commun. Math. Sci.}, 1(2):361--375, 2003.

\bibitem{LiuLocallyMonotone}
W.~Liu and M.~R{\"o}ckner.
\newblock {SPDE in Hilbert Space with Locally Monotone Coefficients}.
\newblock {\em J. Funct. Anal.}, 259(11):2902--2922, 2010.

\bibitem{LiuGeneralCoercive}
W.~Liu and M.~R{\"o}ckner.
\newblock {Local/Global Existence and Uniqueness of Solutions for SPDE with
  Generalized Coercivity Condition}.
\newblock {\em Arxiv preprint arXiv:1202.0019}, 2012.

\bibitem{LordRougement}
G.~J. Lord and J.~Rougemont.
\newblock {A Numerical Scheme for Stochastic PDEs with Gevrey Regularity}.
\newblock {\em IMA J. Numer. Anal.}, 24(4):587--604, 2004.

\bibitem{Mascagni}
M.~Mascagni.
\newblock {An Initial-Boundary Value Problem of Physiological Significance for
  Equations of Nerve Conduction}.
\newblock {\em Comm. Pure Appl. Math.}, 42(2):213--227, 1989.

\bibitem{Pettersson05}
R.~Pettersson and M.~Signahl.
\newblock {Numerical Approximation for a White Noise Driven SPDE with Locally
  Bounded Drift}.
\newblock {\em Potential Anal.}, 22(4):375--393, 2005.

\bibitem{prevot}
C.~Pr{\'e}v{\^o}t and M.~R{\"o}ckner.
\newblock {\em {A Concise Course on Stochastic Partial Differential
  Equations}}, volume 1905 of {\em Lecture Notes in Mathematics}.
\newblock Springer, Berlin, 2007.

\bibitem{SauerLattice}
M.~Sauer and W.~Stannat.
\newblock {Lattice Approximation for Stochastic Reaction Diffusion Equations
  with One-Sided Lipschitz Condition}.
\newblock {\em Arxiv preprint arXiv:1301.6350 to appear in Math. Comp.}, 2013.

\bibitem{Shardlow}
T.~Shardlow.
\newblock {Numerical Methods for Stochastic Parabolic PDEs}.
\newblock {\em Numer. Funct. Anal. Optim.}, 20(1\&2):121--145, 1999.

\bibitem{VeraarHytoenen}
M.~Veraar and T.~Hyt\"onen.
\newblock {On Besov Regularity of Brownian Motions in Infinite Dimensions}.
\newblock {\em Probab. Math. Statist.}, 28:143--162, 2008.

\end{thebibliography}
\end{document}